\numberwithin{equation}{section}
\theoremstyle{plain}
\newtheorem{theorem}{Theorem}[section]
\newtheorem{proposition}[theorem]{Proposition}
\newtheorem{lemma}[theorem]{Lemma}
\newtheorem{corollary}[theorem]{Corollary}
\theoremstyle{definition}
\newtheorem{remark}[theorem]{Remark}
\newcommand\N{\mathbb{N}}
\newcommand\eps{\varepsilon}
\begin{document}

\title[Monotone sequences of totient function]{Monotone nondecreasing sequences of the Euler totient function}

\author{Terence Tao}
\address{UCLA Department of Mathematics, Los Angeles, CA 90095-1555.}
\email{tao@math.ucla.edu}

\subjclass[2020]{11A25}

\begin{abstract}  Let $M(x)$ denote the largest cardinality of a subset of $\{n \in \N: n \leq x\}$ on which the Euler totient function $\varphi(n)$ is nondecreasing.  We show that $M(x) = (1+O(\frac{(\log\log x)^5}{\log x})) \pi(x)$ for all $x \geq 10$, answering questions of Erd\H{o}s and Pollack--Pomerance--Trevi\~no. A similar result is also obtained for the sum of divisors function $\sigma(n)$.
\end{abstract}

\maketitle


\section{Introduction}

Given any finite set $A$ of natural numbers, let $M(A)$ denote the largest cardinality of a subset of $A$ on which the Euler totient function $\varphi(n)$ is nondecreasing (or weakly increasing), thus $\varphi(n) \leq \varphi(m)$ whenever $n \leq m$ are both in $A$.  We also write $M(x)$ for $M([x])$, where $[x]$ denotes the set $[x] \coloneqq \{n \in \N: n \leq x \}$.  For instance, $M(6)=5$, since $\varphi$ is nondecreasing on $\{1,2,3,4,5\}$ or $\{1,2,3,4,6\}$ but not $\{1,2,3,4,5,6\}$. The first few values of $M(x)$ are
$$ 1, 2, 3, 4, 5, 5, 6, 6, 7, 7, 8, 8, 9, 9, 10, 11, 12, 12, \dots$$
(\url{https://oeis.org/A365339}).
Since $\varphi(p)=p-1$ for primes $p$, we clearly have\footnote{See Section \ref{notation-sec} for our asymptotic notation conventions.}
\begin{equation}\label{mlow}
M(x) \geq \pi(x) = \left(1+O\left(\frac{1}{\log x}\right) \right) \frac{x}{\log x}
\end{equation}
for $x \geq 2$, by the prime number theorem.  As is customary, we use $\pi(x)$ to denote the number of primes in $[x]$.  In the opposite direction, it was shown in \cite{pomerance} that
$$ M(x) \leq (1-c+o(1)) W(x)$$
as $x \to \infty$ for some absolute constant $c > 0$, where
$$ W(x) \coloneqq \# \{ \varphi(n): n \in [x] \}$$
(\url{https://oeis.org/A061070}) is the range of values of the Euler totient function up to $x$, and $\# A$ denotes the cardinality of a set $A$.  The precise order of growth of $W$ is known \cite{ford}; applying this estimate (or the earlier bounds from \cite{maier}), we obtain the upper bound
$$ M(x) \leq \exp( (C+o(1)) \log_3^2 x ) \frac{x}{\log x}$$
as $x \to \infty$ for an explicit constant $C = 0.81781\dots$, where we use the abbreviations
$$ \log_2 x \coloneqq \log\log x; \quad \log_3 x \coloneqq \log\log\log x.$$

The main result of this paper is to improve the upper bound on $M(x)$ to asymptotically match the lower bound, with only a slight degradation in the error term.

\begin{theorem}[New upper bound]\label{thm:main}  We have $M(x) \leq \left(1+O\left(\frac{\log^5_2 x}{\log x}\right)\right) \frac{x}{\log x}$ for all $x \geq 10$.  In particular, by \eqref{mlow}, we have
\begin{equation}\label{M-pi}
\pi(x) \leq M(x) \leq \left(1+O\left(\frac{\log^5_2 x}{\log x}\right)\right) \pi(x)
\end{equation}
or equivalently
$$M(x) = \left(1+O\left(\frac{\log^5_2 x}{\log x}\right)\right) \frac{x}{\log x}.$$
\end{theorem}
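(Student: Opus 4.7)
Write $A = (A \cap P) \sqcup C$ with $C := A \setminus P$ the composites in $A$, and set $P' := (P \cap [x]) \setminus A$. Since $|A| = \pi(x) + |C| - |P'|$, Theorem \ref{thm:main} reduces to
\[ |C| - |P'| = O\Bigl(\frac{x \log_2^5 x}{\log^2 x}\Bigr). \]
The driving monotonicity observation is that for each $n \in C$, the ``excluded interval'' $I_n := (\varphi(n), n]$ has the property that every prime $p \in I_n$ other than (possibly) $\varphi(n)+1$ must lie in $P'$: any such $p$ is strictly less than $n$, and $\varphi(p) = p - 1 > \varphi(n)$ would violate the nondecreasing condition if $p \in A$. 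Combined with the fact that a composite $n$ has smallest prime factor at most $\sqrt{n}$, giving $n - \varphi(n) \geq n/p_{\min}(n) \geq \sqrt{n}$, each $I_n$ has length at least $\sqrt{n}$ and hence contains $\gg \sqrt{n}/\log n$ primes that are forced into $P'$.

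If the intervals $\{I_n\}_{n \in C}$ were pairwise disjoint, summing would immediately give $|P'| \gg |C|$ (away from tiny $n$), much stronger than needed. The technical core is to account for overlaps. One would decompose $[x]$ into dyadic ranges $[N, 2N]$ and, within each range, group the composites of $C$, taken in chain order, into maximal ``overlap blocks'' $n_1 < \dots < n_r$ in which successive intervals $I_{n_i}$ all meet. Each such block produces a single merged interval $(\varphi(n_1), n_r]$ of length $n_r - \varphi(n_1) \geq (n_r - n_1) + \sqrt{N}$, contributing $\gg (n_r - n_1 + \sqrt{N})/\log N$ primes to $P'$. Blocks of large span are therefore ``cheap'': the primes they force into $P'$ comfortably absorb their composite count $r$.

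The main obstacle is bounding the composites inside a tightly packed block, where $n_r - n_1$ is much less than $r \sqrt{N}$. The overlap condition $\varphi(n_{i+1}) \leq n_i$ forces each composite $n$ in such a block to satisfy $n - \varphi(n) \gtrsim n_r - n_1$, in particular making $1 - \varphi(n)/n$ atypically small---equivalently, $\sum_{p|n} 1/p$ unusually small, so that $n$ has no small prime factors. A Mertens--Buchstab estimate of the shape $\#\{n \leq y : P^-(n) > z\} \ll y/\log z$, sharpened by a Tur\'an--Kubilius moment bound on $\sum_{p|n} 1/p$, then caps the total number of such anomalous composites. Calibrating the threshold $z$ as a suitable polylogarithm of $x$ to balance this sieve loss against the overlap loss at each dyadic scale yields the error bound $O(x \log_2^5 x/\log^2 x)$, with the exponent $5$ emerging from the optimization. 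Summing the per-scale contributions then completes the proof.
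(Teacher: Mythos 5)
Your reduction to bounding $|C| - |P'|$, and the observation that a composite $n \in A$ forces every prime in $(\varphi(n)+1, n)$ out of $A$, are both correct, and this is a genuinely different starting point from the paper (which instead partitions $[x]$ according to the anatomy of $n$ and tracks the local behaviour of $\varphi(n)/n$). But the very next step is a fatal gap: you assert that $I_n$, having length at least $\sqrt{n}$, ``contains $\gg \sqrt{n}/\log n$ primes.'' No such statement is provable with current technology; it is stronger than Legendre's conjecture, which is open even on RH (RH only yields primes in intervals of length about $\sqrt{n}\log n$ near $n$, and unconditionally one needs length $n^{0.525}$). This is not a repairable technicality but precisely the obstruction the paper isolates in \Cref{obstruct}: for $n = p^2$ the interval $I_n = (p^2-p, p^2]$ has length exactly $\sqrt{n}$, and if it contains no prime then $p^2$ can be inserted into the sequence of primes at zero cost to $P'$ (\Cref{leg}). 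Since the composites in $C$ are chosen adversarially, almost-all or average results on primes in short intervals cannot rescue the step. A secondary error in the same part of the argument: the chaining condition $\varphi(n_{i+1}) \leq n_i$ only forces $n_{i+1} - \varphi(n_{i+1})$ to exceed the consecutive gap $n_{i+1}-n_i$, not the full span $n_r - n_1$, so tightly packed blocks need not consist of integers free of small prime factors.

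The second, independent gap is that your machinery cannot produce the constant $1$ in the main term, which is the entire content of the theorem. In the critical regime the composites in a block are numbers $n = dp$ with $d$ smooth and $p$ a large prime, and the comparison ``number of composites in the block $\leq$ number of primes in the merged interval'' comes down to showing $\sum_{d \in \mathcal{D}} 1/d \leq 1$ for the set $\mathcal{D}$ of cofactors that can coexist in a short window under the monotonicity constraint; this is exactly \Cref{prelim}, the sharp inequality $\sum_{d: \varphi(d)/d = q} 1/d \leq 1$, for which your argument has no counterpart. The sieve inputs you invoke (a Buchstab-type bound $\#\{n \leq y : P^-(n) > z\} \ll y/\log z$ and Tur\'an--Kubilius) give only upper bounds with unspecified constants, so at best they yield $M(x) \ll x/\log x$, weaker than what was already known. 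The example $A = \{2^k p\}$ in \Cref{dickson-sec}, where $|C| - |P'|$ is genuinely of size $\asymp x/\log^2 x$, shows how delicate the required cancellation is: your block decomposition would see a single enormous block there and would have to count its composites to within a factor $1 + O(1/\log x)$ of the primes it excludes.
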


This answers a question of Erd\H{o}s \cite[\S 9]{erdos}, who asked if $M(x) = (1+o(1)) \pi(x)$.  As corollary, we may also answer a ``not unattackable'' question from \cite[p. 381]{pomerance}:

\begin{corollary}  If ${\mathcal I}$ is a subset of $[x]$ on which $\varphi$ is nondecreasing, then $\sum_{n \in {\mathcal I}} \frac{1}{n} \leq \log_2 x + O(1)$ for $x \geq 10$.
\end{corollary}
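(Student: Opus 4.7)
The plan is to deduce the corollary from Theorem~\ref{thm:main} by partial summation. Setting $A(t) \coloneqq \#(\mathcal{I} \cap [t])$, the first ingredient is the trivial hereditary property: for any $t \leq x$, the set $\mathcal{I} \cap [t]$ is itself a subset of $[t]$ on which $\varphi$ is nondecreasing, and so $A(t) \leq M(t)$. Abel summation with $f(t) = 1/t$ then gives
\[
\sum_{n \in \mathcal{I}} \frac{1}{n} \;=\; \frac{A(x)}{x} + \int_1^x \frac{A(t)}{t^2}\,dt \;\leq\; O(1) + \int_1^x \frac{M(t)}{t^2}\,dt,
\]
where the boundary term is $O(1)$ since Theorem~\ref{thm:main} gives $M(x)/x = O(1/\log x)$.

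I would then split the integral at $t = 10$. The range $t \in [1,10]$ contributes $O(1)$ trivially (using $M(t) \leq t$), while for $t \in [10,x]$ Theorem~\ref{thm:main} yields $M(t) \leq (1 + O(\log_2^5 t / \log t))\, t/\log t$, and hence
\[
\int_{10}^x \frac{M(t)}{t^2}\,dt \;\leq\; \int_{10}^x \frac{dt}{t \log t} + O\!\left(\int_{10}^x \frac{\log_2^5 t}{t \log^2 t}\,dt\right).
\]
The main term evaluates to $\log_2 x - \log_2 10 = \log_2 x + O(1)$. For the error term, the substitution $s = \log t$ converts the integral to $\int (\log s)^5/s^2\, ds$, which converges at infinity and therefore contributes $O(1)$. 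Combining these estimates gives the claimed bound $\sum_{n \in \mathcal{I}} 1/n \leq \log_2 x + O(1)$.

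There is no genuine obstacle beyond Theorem~\ref{thm:main} itself: once the sharp upper bound on $M$ is in hand, the corollary reduces to a standard partial-summation calculation. I note moreover that the polynomial quality of the error term $\log_2^5 x/\log x$ is more than adequate here; any refinement of the form $M(t) = (1 + \varepsilon(t))\pi(t)$ with $\varepsilon(t)/\log t$ integrable against $dt/t$ (for instance $\varepsilon(t) = O(1/\log_2^{1+\delta} t)$) would already suffice to deduce the corollary.
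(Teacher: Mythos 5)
Your proof is correct and follows essentially the same route as the paper: partial summation against $M(t)$, the bound of Theorem \ref{thm:main}, the evaluation $\int_{10}^x \frac{dt}{t\log t} = \log_2 x + O(1)$, and the convergence of the error term $\int \frac{\log_2^5 t}{t\log^2 t}\,dt$. The only cosmetic difference is that you use the continuous (Abel) form of partial summation where the paper uses the discrete telescoping identity.
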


\begin{proof}  We may assume $x$ is an integer. By summation by parts (or telescoping series), we have
\begin{align*}
\sum_{n \in {\mathcal I}} \frac{1}{n} &= \sum_{m \leq x} \frac{1}{m(m+1)} \sum_{n \in {\mathcal I}: n \leq m} 1 + \frac{1}{x+1} \sum_{n \in {\mathcal I}} 1 \\
&\leq 1 + \sum_{m \leq x} \frac{1}{m^2} \sum_{n \in {\mathcal I}: n \leq m} 1.
\end{align*}
By Theorem \ref{thm:main}, we have
$$ \sum_{n \in {\mathcal I}: n \leq m} 1 \leq \left(1+O\left(\frac{\log^5_2 m}{\log m}\right)\right) \frac{m}{\log m}$$
for all large $m$.  The claim then follows from the absolute convergence of $\sum_{m \geq 10} \frac{\log^5_2 m}{m \log^2 m}$, and the standard asymptotic $\sum_{10\leq m \leq x} \frac{1}{m \log m} = \log_2 x + O(1)$ for $x \geq 100$.
\end{proof}

In \cite{pomerance} the much stronger claim
\begin{equation}\label{mpi}
 M(x) \leq \pi(x) + O(1)
\end{equation}
was posed as an open question, with numerics in fact suggesting the even more precise conjecture
\begin{equation}\label{pi-64}
    M(x)=\pi(x) + 64
\end{equation}
for $x \geq 31957$ (see \cite[\S 9]{pomerance}, where the conjecture was verified\footnote{This verification was recently extended to $m=8,9$ by Chai Wah Wu; see \url{https://oeis.org/A365474}.} for $x = 10^m$ for $m \leq 7$).  Unfortunately, there appear to be several obstacles to establishing such a strong result, which we discuss in Section \ref{obstruct}. Nevertheless, some improvements in the error term in \eqref{M-pi} may be possible, particularly if one is willing to establish results conditional on such conjectures as Cram\'er's conjecture \cite{cramer} or quantitative forms of the Dickson--Hardy--Littlewood prime tuples conjecture \cite{dickson}, \cite{hardy}; again, see \Cref{obstruct}.

\begin{remark} The lengths $M^\downarrow(x)$ (resp. $M^0(x)$) of the longest sequences in $[x]$ on which $\varphi$ is \emph{nonincreasing} (resp. \emph{constant}), rather than nondecreasing, are considerably smaller than $M(x)$.  Indeed, in \cite{pomerance} the bounds
$$ M^0(x) + x^{0.18} < M^\downarrow(x) \ll x \exp\left(-\left(\frac{1}{2}+o(1)\right) \log^{1/2} x \log^{1/2}_2 x \right)
$$
were shown for large $x$, with the speculation that ``perhaps'' $M^\downarrow(x) \asymp M^0(x)$, while it is also known (see \cite{pom-popular}, \cite[\S 4]{pom-two} for the upper bound, and \cite{lichtman} for the lower bound, which builds upon earlier work in \cite{bh}, \cite{harman}) that
\[
    x^{0.7156} \ll M^0(x) \ll x \exp(-(1+o(1)) \log x \log_3 x / \log_2 x)
\]
for large $x$, with the upper bound conjectured to be sharp.  See \cite{pomerance} for further discussion.
\end{remark}

The arguments in \cite{pomerance} were largely based on an analysis of equations of the form $\varphi(n)=\varphi(n+k)$.  To get the more precise asymptotics of Theorem \ref{thm:main}, we proceed differently, as we shall now explain. Perhaps surprisingly, we do not use deep results from analytic number theory; aside from the prime number theorem with classical error term (\Cref{prime}), all of our tools are elementary.

It is first necessary to establish the following sharp inequality regarding the (weighted) multiplicity of the function $d \mapsto \varphi(d)/d$.

\begin{proposition}[Preliminary inequality]\label{prelim}  For any positive rational number $q$, one has
$$ \sum_{d: \frac{\varphi(d)}{d} = q} \frac{1}{d} \leq 1.$$
\end{proposition}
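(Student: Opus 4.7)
The plan is to first reduce to a sum over squarefree integers, then establish a clean recursion by extracting the largest prime, and finally induct on the largest prime factor of the denominator of $q$. Since $\varphi(d)/d \leq 1$ for every $d$, we may assume $q \in (0,1]$. For any $d$ with $\varphi(d)/d = q$, the ratio depends only on the radical $r := \prod_{p \mid d} p$, which is squarefree with $\varphi(r)/r = q$; summing $1/d$ over all $d$ with a fixed radical gives
\[
\sum_{d \,:\, \operatorname{rad}(d) = r} \frac{1}{d} = \prod_{p \mid r} \sum_{a \geq 1} \frac{1}{p^a} = \prod_{p \mid r} \frac{1}{p-1} = \frac{1}{\varphi(r)}.
\]
Thus it suffices to prove that $G(q) := \sum_{r \text{ squarefree},\, \varphi(r)/r = q} 1/\varphi(r) \leq 1$.

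The key structural observation is as follows: writing $q = A/B$ in lowest terms with $q < 1$ (so $B > 1$, and $B$ is squarefree since $B \mid r$ for any such $r$), and letting $P$ denote the largest prime factor of $B$, every squarefree $r$ with $\varphi(r)/r = q$ has largest prime factor exactly $P$. Indeed, from the identity $A \prod_{p \mid r} p = B \prod_{p \mid r}(p-1)$, the largest prime $p_k$ of $r$ cannot divide any $p-1$ with $p \mid r$ (since $p - 1 < p_k$), forcing $p_k \mid B$ and so $p_k \leq P$; conversely, each prime of $B$ must divide $\prod_{p \mid r} p$ (using $\gcd(A,B)=1$), hence is a prime of $r$, giving $P \leq p_k$.

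Writing $r = Pt$ with $t$ squarefree and coprime to $P$ gives $\varphi(t)/t = qP/(P-1) =: q'$ with all primes of $t$ below $P$. Applying the structural observation to $q'$, whose denominator in lowest terms equals $(B/P)(P-1)/\gcd(A, P-1)$ and thus has all prime factors below $P$, the constraint ``primes of $t$ below $P$'' becomes automatic. This yields the bijective recursion
\[
G(q) = \frac{1}{P-1}\, G\!\left(\frac{qP}{P-1}\right) \quad \text{for } q < 1,
\]
with base case $G(1) = 1$, since the only squarefree $r$ with $\varphi(r) = r$ is $r = 1$.

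The proof then concludes by strong induction on the largest prime factor of the denominator of $q$: at each step $q'$ has a denominator whose largest prime factor is strictly smaller than $P$, so by the inductive hypothesis $G(q') \leq 1$, giving $G(q) \leq 1/(P-1) \leq 1$. The main obstacle, conceptually, is the structural claim that identifies the largest prime of $r$ with that of the denominator of $q$; once this is in hand, the recursion and its termination follow from routine divisibility manipulations, and the induction closes immediately because every step shrinks the inductive parameter while dividing by a factor $\geq 1$.
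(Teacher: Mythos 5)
Your proof is correct and follows essentially the same route as the paper's: the structural claim that the largest prime factor of $r$ equals the largest prime factor of the denominator of $q$, followed by induction on that prime, is exactly the paper's argument, with your initial reduction to squarefree radicals simply replacing the paper's geometric-series summation over the powers $p_1^i$. Your recursion $G(q) = \frac{1}{P-1} G(qP/(P-1))$ also recovers, implicitly, the paper's exact formula $\prod_{p \in \mathcal{P}} \frac{1}{p-1}$ for the sum when it is nonzero.
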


The reason we need to establish this inequality is as follows.  Suppose for contradiction that the inequality failed, then one could find a rational number $q$ and a finite set ${\mathcal D}$ of natural numbers $d$ with $\frac{\varphi(d)}{d}=q$ such that
$$ \sum_{d \in {\mathcal D}} \frac{1}{d} > 1.$$
Observe that if $n = dp$ for $d \in {\mathcal D}$ and $p$ a prime larger than the largest element $\max {\mathcal D}$ of ${\mathcal D}$, then
\begin{equation}\label{varphi-form}
    \varphi(n) = \varphi(d) \varphi(p) = qd (p-1) = q (n-d).
\end{equation}
To exploit this, let $A$ be the set of numbers in $[x]$ of the form $dp$, where $d \in {\mathcal D}$ and $p > \max \mathcal{D}$ is a prime, and let $A'$ be the set $A$ with any consecutive elements of $A$ with difference less than $\max {\mathcal D}$ deleted.  If $d_1p_1 < d_2 p_2$ are consecutive elements of $A'$, then by \eqref{varphi-form} we have
$$ \varphi(p_1 d_1) - \varphi(p_2 d_2) = q ( p_1 d_1 - d_1 - p_2 d_2 + d_2) \geq q (\max \mathcal{D} - d_1 + d_2) \geq q d_2 > 0.$$
In particular,  $\varphi$ is increasing on $A'$, and one can check using the prime number theorem (as well as standard upper bound sieves to bound the number of elements deleted) that
$$ \# A' = \left(\sum_{d \in {\mathcal D}} \frac{1}{d} + o(1)\right) \frac{x}{\log x}$$
as $x \to \infty$, which would contradict Theorem \ref{thm:main}.  Thus we must prove Proposition \ref{prelim} first.  Fortunately, this is easy to accomplish thanks to a minor miracle: the sum appearing in Proposition \ref{prelim} can be computed almost exactly whenever it is non-zero!  See Lemma \ref{exact} below.

We now informally outline the proof of Theorem \ref{thm:main}. Standard results from the anatomy of integers will tell us that all numbers $n$ in $[x]$ outside of a small exceptional set $E$ will be of one of the following two forms:
\begin{itemize}
    \item[($A_1$)] $n=dp$, where $d$ is a very smooth number (all prime factors small), and $p$ is a large prime.
    \item[($A_2$)] $n=dps$, where $p$ is a medium-sized prime, $d$ has all prime factors less than $p$, and $s$ is an almost prime (a prime or product of two primes), with all prime factors of $s$ much larger than $p$.
\end{itemize}
The precise definitions of $A_1$, $A_2$ and $E$ are given in Section \ref{main-sec}, with the decomposition justified by Lemma \ref{lem-decomp} and Proposition \ref{prop-e}.

Suppose first that $n=dps$ is of type $A_2$. From multiplicativity we have the identity
$$ \varphi(n) = \frac{\varphi(d)}{d} \left(1 - \frac{1}{p}\right) \frac{\varphi(s)}{s} n;$$
since $s$ is almost prime and has large prime factors we are led to the approximation
$$ \varphi(n) \approx \frac{\varphi(d)}{d} \left(1 - \frac{1}{p}\right) n.$$
The upshot of this approximation is that if we restrict $d$ to a fixed value, and restrict $n$ to a somewhat short interval, then $\varphi(n)$ can only be nondecreasing if the prime $p$ is also essentially nondecreasing.  The presence of the unspecified factor $s$ allows one to decouple the magnitude of $n$ from the magnitude of $p$, even when $d$ is fixed; as a consequence, this additional monotonicity constraint on $p$ turns out to significantly cut down the possible length of a sequence of numbers of this form in which $\varphi$ is increasing, to the point where the contribution of this case is significantly less than $\frac{x}{\log x}$.

If instead $n=dp$ is of type $A_1$, then we similarly have the approximation
$$ \varphi(n) \approx \frac{\varphi(d)}{d} n,$$
and so the quantity $\frac{\varphi(d)}{d}$ must (heuristically, at least) be locally nondecreasing in order for $\varphi(n)$ to be nondecreasing.  Upon applying the prime number theorem, one can basically conclude that the size of such a sequence is at most
\[ \left(\sup_q \sum_{d: \frac{\varphi(d)}{d}=q} \frac{1}{d} + o(1)\right) \frac{x}{\log x},\]
and one can use Lemma \ref{prelim} to conclude (after performing a more careful accounting of error terms).

The argument used to establish \Cref{thm:main} can also be adapted to treat the analogous problem for the sum of divisors function $\sigma(n)$ as well as the Dedekind totient function $\psi(n)$: see \Cref{divisors-sec}.

The author is supported by NSF grant DMS-1764034.  We thank Wouter van Doorn, Kevin Ford, Paul Pollack and Carl Pomerance for helpful comments, suggestions, and references, and to Shengtong Zhang to providing the key ingredient needed to obtain the analogous results for the sum of divisors function.  We also thank the anonymous referees for several useful comments and references, including \cite{lichtman} and Remark \ref{dedekind-rem}.

\subsection{Notation and basic tools}\label{notation-sec}

We use the usual asymptotic notation $X \ll Y$, $Y \gg X$, or $X = O(Y)$ to denote the claim $|X| \leq CY$ for an absolute constant $C$, $X \asymp Y$ to denote the claim $X \ll Y \ll X$, and $X=o(Y)$ to denote the claim $|X| \leq c(x) Y$ for some quantity $c(x)$ that goes to zero as $x \to \infty$.

The symbol $p$ will be understood to range over primes unless otherwise indicated.

For any $y \geq 2$, we let $\N_{\leq y}$ denote the set of $y$-smooth numbers, that is the natural numbers whose prime factors are all less than or equal to $y$. We similarly let $\N_{<y}$ denotes the natural numbers whose prime factors are all strictly less than $y$.

We record a useful (if somewhat crude) estimate:

\begin{lemma}[Rankin trick]\label{rankin}  If $y \geq 10$, and $a_d$ is a non-negative real number for every $d \in \N_{\leq y}$, then
$$
\sum_{d \in \N_{\leq y}} a_d \ll \log y \sup_{d \in \N_{\leq y}}\left( d^{1-\frac{1}{\log y}} a_d \right).
$$
\end{lemma}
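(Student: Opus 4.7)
The strategy is the classical Rankin trick. Setting $s \coloneqq 1 - \frac{1}{\log y}$ and
$$M \coloneqq \sup_{d \in \N_{\leq y}} d^{s} a_d,$$
we have $a_d \leq M d^{-s}$ for every $d \in \N_{\leq y}$, which reduces the desired inequality to showing
$$\sum_{d \in \N_{\leq y}} d^{-s} \ll \log y.$$
By unique factorization over $y$-smooth numbers, this sum factors as an Euler product
$$\sum_{d \in \N_{\leq y}} d^{-s} = \prod_{p \leq y} \left(1 - p^{-s}\right)^{-1},$$
so it suffices to bound its logarithm $\sum_{p \leq y}\sum_{k \geq 1} \frac{p^{-ks}}{k}$ by $\log\log y + O(1)$.

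For the leading $k = 1$ term I would write $p^{-s} = p^{-1} \cdot p^{1/\log y}$ and apply the secant bound $p^{1/\log y} = e^{\log p/\log y} \leq 1 + (e-1)\frac{\log p}{\log y}$, valid for $p \leq y$ by convexity of $e^{x}$ on $[0,1]$. Combined with the Mertens estimates
$$\sum_{p \leq y} \frac{1}{p} = \log\log y + O(1), \qquad \sum_{p \leq y} \frac{\log p}{p} = \log y + O(1),$$
this gives $\sum_{p \leq y} p^{-s} \leq \log\log y + O(1)$.

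For $k \geq 2$ I would use the geometric bound $\sum_{k \geq 2} p^{-ks}/k \leq p^{-2s}/(1 - p^{-s})$. The hypothesis $y \geq 10$ ensures $s \geq 1/2$, so that $1 - p^{-s} \geq 1 - 2^{-1/2}$ is bounded below uniformly; meanwhile, the crude bound $p^{2/\log y} \leq y^{2/\log y} = e^{2}$ for $p \leq y$ gives $\sum_p p^{-2s} \leq e^{2}\sum_p p^{-2} = O(1)$. Hence the $k \geq 2$ tail contributes only $O(1)$, and exponentiating yields $\prod_{p \leq y}(1-p^{-s})^{-1} \ll \log y$, as required.

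The delicate point is extracting the coefficient $1$ (rather than $e$ or larger) in front of $\log\log y$ in the $k=1$ estimate: the naive bound $p^{1/\log y} \leq e$ would give $e\log\log y + O(1)$ and produce only the useless bound $(\log y)^{e}$. The choice $s = 1 - 1/\log y$ is calibrated precisely so that the error term $\frac{e-1}{\log y}\sum_{p \leq y}\frac{\log p}{p}$ is $O(1)$, and this is the reason this exponent appears in the Rankin trick.
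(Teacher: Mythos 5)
Your proof is correct and follows essentially the same route as the paper: factor the sum over $y$-smooth numbers as an Euler product, use Mertens' theorems to show the logarithm is $\log\log y + O(1)$ (the key point in both arguments being that $p^{1/\log y} = 1 + O(\log p/\log y)$ so that the first Mertens estimate $\sum_{p\le y}\frac{\log p}{p} \ll \log y$ absorbs the error), and handle the $k \ge 2$ terms as an $O(1)$ contribution. Your explicit secant bound $e^t \le 1+(e-1)t$ is just a quantitative version of the paper's $e^t = 1+O(t)$; no substantive difference.
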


One could obtain sharper bounds here by using standard bounds on smooth numbers here, such as those in \cite{deb}, but the above crude estimate suffices for our application.  In this lemma we allow the quantities on either side to be infinite, though in applications we will only use the lemma when the right-hand side is finite, as the claim is trivial otherwise.

\begin{proof}  By Euler product factorization and Mertens' theorems, we have
\begin{align*}
\sum_{d \in \N_{\leq y}} \frac{1}{d^{1-\frac{1}{\log y}}} &= \prod_{p \leq y} \left( 1 + \frac{p^{1/\log y}}{p} + O\left( \frac{1}{p^{3/2}} \right)\right) \\
&= \exp\left( \sum_{p \leq y} \frac{1}{p} + O\left( \frac{\log p/\log y}{p} \right) + O\left( \frac{1}{p^{3/2}} \right) \right)\\
&= \exp( \log_2 y + O(1) ) \\
&\ll \log y.
\end{align*}
The claim now follows by bounding $a_d \leq \frac{1}{d^{1-\frac{1}{\log y}}}  \sup_{d' \in \N_{\leq y}}\left( (d')^{1-\frac{1}{\log y}} a_{d'}\right)$.
\end{proof}

We also need some well known bounds for primes in intervals:

\begin{lemma}[Primes in intervals]\label{prime}  Let $y \geq 10$, and let $I$ be an interval in $[2,y]$.  Let $|I|$ denote the length of $I$.
Then the number of primes in $I$ is equal to
$$\int_I \frac{\mathrm{d}t}{\log t} + O( y \exp(-c\sqrt{\log y}))$$
for some absolute constant $c>0$.  In particular, we have the crude upper bound of $O(y/\log y)$.
\end{lemma}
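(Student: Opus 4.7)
The plan is to derive this as a standard corollary of the prime number theorem with the de la Vallée Poussin error term, which states that
\[ \pi(x) = \int_2^x \frac{\mathrm{d}t}{\log t} + O\bigl(x \exp(-c_0 \sqrt{\log x})\bigr) \]
for all $x \geq 2$ and some absolute constant $c_0 > 0$. Writing $I = [a,b]$ with $2 \leq a \leq b \leq y$, the number of primes in $I$ differs from $\pi(b)-\pi(a)$ by at most an endpoint contribution of $O(1)$. Applying the asymptotic above at $x=b$ and $x=a$ and subtracting yields
\[ \#\{p \in I\} = \int_a^b \frac{\mathrm{d}t}{\log t} + O\bigl(a \exp(-c_0 \sqrt{\log a})\bigr) + O\bigl(b \exp(-c_0 \sqrt{\log b})\bigr) + O(1), \]
so the task reduces to dominating both error terms (and the harmless $O(1)$) by $O(y \exp(-c \sqrt{\log y}))$ for some absolute $c > 0$.

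For this I would verify that, after possibly shrinking $c_0$, the function $f(t) \coloneqq t \exp(-c_0 \sqrt{\log t})$ is nondecreasing on $[2, y]$. A direct computation gives
\[ \frac{f'(t)}{f(t)} = \frac{1}{t} - \frac{c_0}{2 t \sqrt{\log t}}, \]
which is positive once $\sqrt{\log t} > c_0/2$; choosing $c_0 \leq \sqrt{\log 2}$ (or absorbing the bounded region $t = O(1)$ into an additive constant) makes $f$ monotone on the whole of $[2,y]$. Hence $f(a), f(b) \leq f(y)$ and the two error terms collapse to $O(y \exp(-c_0 \sqrt{\log y}))$; the $O(1)$ is swallowed by the same quantity, since $y \exp(-c_0\sqrt{\log y}) \gtrsim 1$ for $y \geq 10$.

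For the final ``crude upper bound'' of $O(y/\log y)$, I would bound the main term by $\int_a^b \frac{\mathrm{d}t}{\log t} \leq \int_2^y \frac{\mathrm{d}t}{\log t} \ll y/\log y$ (a standard integration by parts), and observe that the sub-exponential error $y \exp(-c_0 \sqrt{\log y})$ is dwarfed by $y/\log y$. The only real ``obstacle'' is the bookkeeping of the non-monotonicity of $t \mapsto t \exp(-c_0\sqrt{\log t})$ near $t = 2$, which is handled by the constant-shrinking trick above; everything else is a direct invocation of the classical PNT, so the lemma is essentially a packaging statement.
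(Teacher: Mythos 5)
Your proposal is correct and is exactly the route the paper takes: the paper's proof is the one-line ``Immediate from the prime number theorem with classical error term,'' and your write-up simply fills in the routine endpoint subtraction and the absorption of the error terms into $O(y\exp(-c\sqrt{\log y}))$.
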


\begin{proof} Immediate from the prime number theorem with classical error term.
\end{proof}

As a corollary of the prime number theorem, we obtain a Mertens-type estimate.

\begin{lemma}[Mertens-type bound]\label{mertens} If $2 \leq z \leq y$, then
$$ \sum_{z \leq p \leq y} \frac{1}{p} \ll \frac{\exp(-c \sqrt{\log z}) + \log \frac{y}{z}}{\log z}$$
for an absolute constant $c>0$.
\end{lemma}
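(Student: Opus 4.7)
The plan is to compute $\sum_{z \leq p \leq y} 1/p$ by Abel summation, using \Cref{prime} in the form $\pi(t) = L(t) + R(t)$ where $L(t) \coloneqq \int_2^t ds/\log s$ and $R(t) = O(t\exp(-c\sqrt{\log t}))$, and then clean up via elementary inequalities. Applying Abel summation with weight $1/t$ over $[z,y]$ yields
\[
\sum_{z \leq p \leq y} \frac{1}{p} = \frac{\pi(y) - \pi(z^-)}{y} + \int_z^y \frac{\pi(t) - \pi(z^-)}{t^2}\, dt.
\]
Substituting $\pi = L + R$, the $L$-part evaluates (via $L'(t) = 1/\log t$ and a short integration-by-parts computation) to $\log\log y - \log\log z$. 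For the $R$-part, the boundary contributions are each $O(\exp(-c\sqrt{\log z}))$, and the integral $\int_z^y R(t)/t^2\,dt$ is estimated via the substitution $u = \sqrt{\log t}$ (so $dt/t = 2u\,du$), which reduces it to $\int_{\sqrt{\log z}}^{\infty} u e^{-cu}\,du = O(\sqrt{\log z}\exp(-c\sqrt{\log z}))$. Altogether,
\[
\sum_{z \leq p \leq y} \frac{1}{p} = \log\log y - \log\log z + O\bigl(\sqrt{\log z}\exp(-c\sqrt{\log z})\bigr).
\]

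The main term is at most $\log(y/z)/\log z$ by the elementary inequality $\log(1+u) \leq u$ for $u \geq 0$:
\[
\log\log y - \log\log z = \log\!\left(1 + \frac{\log(y/z)}{\log z}\right) \leq \frac{\log(y/z)}{\log z}.
\]
For the error, I would observe that for any $c' \in (0,c)$ one has
\[
\sqrt{\log z}\exp(-c\sqrt{\log z}) \cdot \log z \ll \exp(-c'\sqrt{\log z}),
\]
since setting $v = \sqrt{\log z} \geq \sqrt{\log 2}$ this reads $v^3 \ll e^{(c-c')v}$, which is routine uniformly in $v$. Dividing by $\log z$ converts the error into $O(\exp(-c'\sqrt{\log z})/\log z)$, and combining with the bound on the main term gives the lemma (with the constant $c$ in the statement taken to be $c'$).

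The main obstacle, such as it is, is purely notational: carrying out the Abel summation carefully on $[z,y]$ (tracking the $\pi(z^-)$ term through the various cancellations) and absorbing the polynomial factor of $\sqrt{\log z}\cdot\log z$ into the exponential by slightly shrinking the decay constant. Both are standard manipulations, and no input beyond \Cref{prime} is required.
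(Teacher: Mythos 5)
Your proof is correct, but it takes a genuinely different route from the paper's. The paper quotes Mertens' second theorem off the shelf, with its classical error term $O(1/\log z)$: this suffices when $y > 2z$, since there the main term $\log(y/z)/\log z$ already dominates $1/\log z$ and the $\exp(-c\sqrt{\log z})$ term can simply be discarded, but it fails badly when $y$ is close to $z$, so the paper handles the remaining range $z \leq y \leq 2z$ by a separate argument (bounding $1/p \ll 1/y$ and counting the primes in $[z,y]$ via Lemma \ref{prime}). You instead re-derive Mertens' theorem directly from Lemma \ref{prime} by Abel summation, which produces the far smaller error $O(\sqrt{\log z}\exp(-c\sqrt{\log z}))$ and therefore covers both regimes in a single computation, the polynomial prefactor being absorbed by shrinking the decay constant exactly as you indicate. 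Your version is self-contained modulo Lemma \ref{prime} and avoids the case split, at the cost of carrying out the partial summation and the boundary-term bookkeeping explicitly (including the harmless point that $\pi(z^-)$ differs from $L(z)$ by $O(z\exp(-c\sqrt{\log z}))$); the paper's version is shorter on the page because it outsources the main term to a standard reference. Both arguments are complete and rely on nothing beyond the prime number theorem with classical error term.
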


\begin{proof} By Mertens' second theorem we have
\begin{align*}
    \sum_{z \leq p \leq y} \frac{1}{p} &= \log\log y - \log\log z + O\left(\frac{1}{\log z}\right) \\
    &\ll \log \left(1 + \frac{\log(y/z)}{\log z}\right) + \frac{1}{\log z}
\end{align*}
which is acceptable if $y > 2z$ (in which case we may discard the $\exp(-c \sqrt{\log z}) $ term).  In the remaining case $z \leq y \leq 2z$,one can bound $\frac{1}{p} \ll \frac{1}{y}$ and apply Lemma \ref{prime}.
\end{proof}

\section{Proof of preliminary inequality}\label{prelim-sec}

We now prove Proposition \ref{prelim}.  It turns out that (somewhat remarkably) one has a reasonably exact formula for the sum of interest, which implies Proposition \ref{prelim} as an immediate corollary:

\begin{lemma}[Exact formula]\label{exact}  Let $q=a/b$ be a positive rational number in lowest terms.  Then the expression $\sum_{d: \frac{\varphi(d)}{d} = q} \frac{1}{d}$ either vanishes, or is equal to $\prod_{p \in {\mathcal P}} \frac{1}{p-1}$, for some finite set of primes ${\mathcal P}$ whose largest element is equal to the largest prime factor of $b$ (with ${\mathcal P}$ empty when $b=1$).  In particular, Proposition \ref{prelim} holds.
\end{lemma}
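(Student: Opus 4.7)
My plan is to group the sum according to the set of prime divisors of $d$, evaluate each group by a geometric series, and then show that at most one such set contributes to any given rational $q$. Since $\varphi(d)/d = \prod_{p\mid d}(1-1/p)$ depends only on the radical of $d$, the admissible $d$'s are precisely the integers of the form $d = \prod_{p \in S} p^{e_p}$ with $e_p \geq 1$ for each $p$, where $S$ ranges over finite sets of primes satisfying $\prod_{p\in S}(p-1)/p = q$. Summing the geometric series in each exponent yields
\[ \sum_{\substack{d\,:\,\mathrm{rad}(d) = \prod_{p \in S} p}} \frac{1}{d} \;=\; \prod_{p\in S}\sum_{e\geq 1}\frac{1}{p^e} \;=\; \prod_{p\in S} \frac{1}{p-1}, \]
so the whole sum equals $\sum_{S \text{ admissible}} \prod_{p\in S}\frac{1}{p-1}$.

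The heart of the argument is to show that the admissible $S$ is unique. Write $q = a/b$ in lowest terms and suppose $S$ is admissible, with largest element $p^*$. The denominator of $\prod_{p\in S}(p-1)/p$ before reduction is $\prod_{p\in S} p$, and $p^*$ cannot cancel against the numerator $\prod_{p \in S}(p-1)$, because every $p \in S$ satisfies $p - 1 < p^*$ and $p^*$ is prime. Hence $p^*$ divides $b$. Conversely, every prime factor of $b$ must come from the denominator $\prod_{p \in S} p$, so is at most $p^*$. Thus $p^*$ is determined by $q$ alone: it is exactly the largest prime factor of $b$. Removing $p^*$ from $S$ and replacing $q$ by $q \cdot p^*/(p^*-1)$ produces an admissible configuration for a smaller rational, so an induction on $|S|$ (with base case $b = 1$ forcing $S = \emptyset$) yields uniqueness.

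Combining the two steps, the sum either vanishes (no admissible $S$) or equals $\prod_{p\in\mathcal{P}}\frac{1}{p-1}$ for the unique admissible $\mathcal{P} = S$, whose largest element is the largest prime factor of $b$, as required. Because each factor $\frac{1}{p-1}$ is at most $1$, the product is bounded by $1$, which establishes Proposition \ref{prelim}. The main obstacle—and what might be called the ``miracle'' alluded to in the introduction—is spotting the uniqueness: once one observes that the largest prime of $S$ is rigidly determined by the reduced denominator of $q$, everything else is a short geometric-series calculation.
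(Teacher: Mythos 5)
Your proof is correct and rests on the same key observation as the paper's: the largest prime dividing $d$ is forced to equal the largest prime factor of $b$ (since that prime cannot cancel against $\prod_{p\in S}(p-1)$), after which a geometric series produces the factor $\frac{1}{p-1}$ and one inducts by peeling off that prime. The only difference is organizational --- you first group the sum by radical and then prove uniqueness of the admissible radical, whereas the paper interleaves the geometric-series summation with a single induction on the largest prime factor of $b$ --- so this is essentially the paper's argument.
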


\begin{proof}  When $b=1$, the only possible value of $q$ that is of the form $\frac{\varphi(d)}{d}$ is $q=1$, which is attained precisely when $d=1$, so the identity holds in this case.

To handle the $b>1$ case we induct on the largest prime factor $p_1$ of $b$, assuming that the claim has already been proven for smaller values of $p_1$ (with the convention that $p_1=1$ when $b=1$).  We may assume that $q$ is of the form $\frac{\varphi(d)}{d}$ for at least one $d$.  For such a $d$, we have
$$ \frac{a}{b} = q = \frac{\varphi(d)}{d} = \prod_{p|d} \frac{p-1}{p}.$$
From this identity we see that the largest prime factor $p_1$ of $b$ must also be the largest prime factor of $d$.  Thus we can write $d = p_1^i d'$ where $i \geq 1$, $d' \in \N_{<p_1}$ and $\frac{\varphi(d')}{d'} = q'$, where $q' = \frac{a'}{b'}$ is equal to $\frac{p_1}{p_1-1} q$ in lowest terms.  By the same computation, the largest prime factor $p'_1$ of $b'$ is equal to that of $d'$, and thus $p'_1 < p_1$.  Conversely, if $d'$ is such that
$\frac{\varphi(d')}{d'} = \frac{a'}{b'}$, then the largest prime factor of $d'$ must equal $p'_1$, so that $d'$ is coprime to $p_1$, and hence any $d$ of the form $d = p_1^i d'$ with $i \geq 1$ is such that
$$ \frac{\varphi(d)}{d} = \frac{p_1-1}{p_1} \frac{\varphi(d')}{d'} = q.$$
Thus we have
$$ \sum_{d: \frac{\varphi(d)}{d} = q} \frac{1}{d} = \sum_{i=1}^\infty \sum_{d': \frac{\varphi(d')}{d'} = q'} \frac{1}{p_1^i d'} = \frac{1}{p_1-1}
\sum_{d': \frac{\varphi(d')}{d'} = q'} \frac{1}{d'}.$$
The claim now follows from the induction hypothesis.
\end{proof}

\begin{remark}\label{eq}  From Lemma \ref{exact} we see that equality in Proposition \ref{prelim} only holds when $q=1$ or $q=\frac{1}{2}$, with the improved inequality $\sum_{d: \frac{\varphi(d)}{d} = q} \frac{1}{d} \leq \frac{1}{2}$ holding in all other cases.  The equality when $q = \frac{1}{2}$ produces an almost viable counterexample to conjectures such as \eqref{mpi}; see Section \ref{dickson-sec} below.
\end{remark}

\begin{remark}  Lemma \ref{exact} is closely related to the well known fact (see e.g., \cite{ford-b}) that if $n,m$ are natural numbers, then $\varphi(n)/n$ and $\varphi(m)/m$ are equal if and only if $n,m$ have exactly the same set of prime factors.
\end{remark}

\section{Main argument}\label{main-sec}

We now prove Theorem \ref{thm:main}.  We may of course take $x$ to be larger than any given absolute constant.  We will need some intermediate scales
\begin{equation}\label{l-scale}
 1 < L < D^{1/\log L} < D < \exp(\sqrt{\log x}) < R < x.
\end{equation}
There is some flexibility in how to select these scales; we will make the choice
\begin{align}
L &\coloneqq \log^{10} x = \exp(10 \log_2 x)\label{L-def} \\
D &\coloneqq \exp(\log_2^3 x) = \left(\exp(\log_2^2 x/10)\right)^{\log L} \label{D-def} \\
R &\coloneqq x^{\frac{1}{3\log_2 x}} = \exp\left( \frac{\log x}{3\log_2 x} \right). \label{R-def}
\end{align}
Note that these scales are widely separated, in the sense that any fixed power of one of the expressions in \eqref{l-scale} is still less than the next expression in \eqref{l-scale}, if $x$ is large enough.  For instance, the savings $\exp(-c\sqrt{\log x})$ arising from Lemma \ref{prime} can easily absorb any polynomial losses in $L$ and $D$.  In a similar spirit, we have $\log \frac{R}{L^2} \asymp \log \frac{R}{L} \asymp \log R$, $\log \frac{x}{DL} \asymp \log \frac{x}{D} \asymp \log x$, $\log(DL) \asymp \log D$, and so forth.

We observe the triangle inequality\footnote{This is of course a rather crude inequality, as it concedes the ability to exploit the monotonicity of $\varphi$ when comparing an element of $A$ with an element of $B$.  To make significant improvements to \eqref{M-pi}, one should probably avoid using this inequality, and instead take advantage of the ``global'' monotonicity of $\varphi$ on the entire sequence under consideration.}
\begin{equation}\label{triangle}
M(A \cup B) \leq M(A) + M(B)
\end{equation}
as well as the trivial bound
\begin{equation}\label{trivial}
M(A) \leq \# A.
\end{equation}
We shall cover the set $[x]$ by three (slightly overlapping) sets
\begin{equation}\label{decomp}
[x] = A_1 \cup A_2 \cup E
\end{equation}
which by \eqref{triangle}, \eqref{trivial} implies that
\begin{equation}\label{triangle-split}
 M(x) \leq M(A_1) + M(A_2) + \# E.
\end{equation}
We will control each of the three terms on the right-hand side separately.

The exceptional set $E$ is defined to be the set of all $n \in [x]$ having an unusual prime factorization in at least one of the following senses:
\begin{itemize}
    \item[(i)] (Smallness) $n \leq \frac{x}{L}$.
    \item[(ii)] (Smoothness) $n \in \N_{\leq R}$.
    \item[(iii)] (Large square factor) $n$ is divisible by $d^2$ for some $d>L$.
    \item[(iv)] (Large smooth factor) $n$ is divisible by some $d \in \N_{\leq L}$ with $d > D$.
    \item[(v)]  (Smooth times two nearby primes) $n = d p_2 p_1$ where $d \in \N_{\leq L}$ and $p_1,p_2$ are primes with $\frac{R}{L} \leq p_2 \leq p_1 \leq p_2 L$.
    \item[(vi)] (Integer times three nearby primes) $n = d p_3 p_2 p_1$ where $p_1,p_2,p_3$ are primes with $\frac{R}{L^2} \leq p_3 \leq p_2 \leq p_1 \leq p_3 L^2$.
\end{itemize}
The primary set $A_1$ is defined to be the set of all $n \in [x]$ of the form
$$ n = dp$$
where $d \in \N_{\leq L}$ with $d \leq D$, $p$ is a prime, and $n > \frac{x}{L}$.  The secondary set $A_2$ is defined to be the set of all $n \in [x]$ of the form
$$ n = dps$$
where $p$ is a prime greater than $L$, $d \in \N_{<p}$, and $s$ is an almost prime (either a prime or product of two primes) whose prime factors all exceed $pL$.

\begin{lemma}[Decomposition]\label{lem-decomp} If $n \in [x]$, then $n$ lies in at least one of $A_1, A_2, E$.   In other words, \eqref{decomp} (and hence \eqref{triangle-split}) holds.
\end{lemma}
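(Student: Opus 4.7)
The plan is to take an arbitrary $n \in [x]$ and show that if $n$ fails to satisfy any of the six defining conditions of $E$, then it must lie in $A_1 \cup A_2$. The proof proceeds by a case analysis on the number and relative sizes of the prime factors of $n$ that exceed $L$. Write $n = m \cdot p_1 p_2 \cdots p_j$, where $m = \prod_{p \leq L} p^{v_p(n)}$ is the $L$-smooth part of $n$ and $p_1 > p_2 > \cdots > p_j$ are the distinct primes exceeding $L$ dividing $n$. Since $n \notin E$, condition (iv) gives $m \leq D$, condition (iii) (combined with $p_i > L$) forces each $p_i$ to appear only to the first power in $n$, condition (ii) gives $j \geq 1$ together with $p_1 > R$, and condition (i) gives $n > x/L$.

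If $j = 1$, then $n = mp_1$ with $m \in \N_{\leq L}$, $m \leq D$, $p_1$ prime, and $n > x/L$, which is exactly the defining form of $A_1$. For $j \geq 2$, the strategy is to locate the first large ratio among $p_1/p_2$, $p_2/p_3$ and use it to assemble the $A_2$ decomposition $n = dps$. Concretely: if $p_1 > p_2 L$, set $d \coloneqq m p_3 \cdots p_j$, $p \coloneqq p_2$, $s \coloneqq p_1$; if instead $p_1 \leq p_2 L$ but $p_2 > p_3 L$ (which forces $j \geq 3$), set $d \coloneqq m p_4 \cdots p_j$, $p \coloneqq p_3$, $s \coloneqq p_1 p_2$. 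In either branch, the prime factors of $d$ are either $\leq L < p$ or among the $p_i$ strictly below the chosen pivot, so $d \in \N_{<p}$; $s$ is almost prime by construction; $p > L$; and the prime factors of $s$ exceed $pL$ by the chosen gap inequality. Thus $n \in A_2$.

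It remains to treat the complementary cases, which I claim are all absorbed by (v) or (vi). If $j = 2$ and $p_1 \leq p_2 L$, then $p_2 \geq p_1/L > R/L$, so $n = m p_1 p_2$ fits (v). If $j \geq 3$ and both $p_1 \leq p_2 L$ and $p_2 \leq p_3 L$ hold, then $p_3 \leq p_2 \leq p_1 \leq p_3 L^2$, while $p_3 \geq p_1/L^2 > R/L^2$ using $p_1 > R$, so $n = (m p_4 \cdots p_j) p_3 p_2 p_1$ fits (vi). The main bookkeeping issue is merely verifying the boundary inequalities on $p_3$ and making sure the empty-product convention handles the cases $j = 2$ and $j = 3$ in the $A_2$ branches correctly; both are immediate from $p_1 > R$ combined with the consecutive gap bounds. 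No deeper input is needed: the lemma is purely a structural decomposition based on the anatomy of the prime factorization of $n$.
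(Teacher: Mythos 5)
Your proposal is correct and follows essentially the same route as the paper: extract the $L$-smooth part, use avoidance of (ii)--(iv) to control it and the large prime factors, and then case-split on the gaps $p_1/p_2$ and $p_2/p_3$, sending the ``nearby primes'' configurations to cases (v) and (vi) of $E$ and everything else to $A_1$ or $A_2$ with the same choices of $d,p,s$. The only cosmetic difference is that you organize the argument around the distinct primes exceeding $L$ rather than the four largest prime factors with multiplicity, which changes nothing of substance.
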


\begin{proof}  We may assume that $n$ avoids all the cases (i)--(vi) of $E$.  Let $p_1 \geq p_2 \geq p_3 \geq p_4$ be the four largest prime factors of $n$ (with the convention that $p_i=1$ if $n$ has fewer than $i$ prime factors).  Since $n$ avoids the cases (i), (ii) of $E$ we have $n > \frac{x}{L}$ and $p_1 > R$.  We divide into three cases.
\begin{itemize}
    \item If $p_2 > \frac{p_1}{L}$, then we must have $p_3 \leq \frac{p_2}{L}$ (since $n$ avoids case (vi) of $E$), and $p_3 > L$ (since $n$ avoids case (v) of $E$).  We then have $p_4 \neq p_3$ (since $n$ avoids case (iii) of $E$).  One then checks that one is in the set $A_2$ (with $p = p_3$, $s = p_2 p_1$, and $d = \frac{n}{p_1 p_2 p_3}$).
    \item If $L < p_2 \leq \frac{p_1}{L}$, then we must have $p_3 \neq p_2$ (since $n$ avoids case (iii) of $E$).  One then checks that one is in the set $A_2$ (with $p=p_2$, $s=p_1$, and $d = \frac{n}{p_1 p_2}$).
    \item Finally, if $p_2 \leq L$, then the quantity $d \coloneqq \frac{n}{p_1}$ is at most $D$ (since $n$ avoids case (iv) of $E$), and then one checks that one is in the set $A_1$ (with $p=p_1$).
\end{itemize}
\end{proof}

Now we verify that the exceptional set $E$ is small:

\begin{proposition}[$E$ small]\label{prop-e}  We have $\# E \ll \frac{x \log^5_2 x}{\log^2 x}$.
\end{proposition}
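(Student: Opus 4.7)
The plan is to bound each of the six subsets of $E$ defined by conditions (i)--(vi) separately, then sum. The scales \eqref{L-def}--\eqref{R-def} are engineered precisely so that the ``crude'' cases (i)--(iv) each give negligible contributions, and the true bottleneck is the two ``nearby primes'' cases (v) and (vi), each of which will contribute $O(x \log_2^4 x / \log^2 x)$.

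Cases (i) and (iii) are immediate: (i) contributes at most $x/L = x/\log^{10} x$, and (iii) contributes at most $\sum_{d > L} x/d^2 \ll x/L$. For case (ii), I apply \Cref{rankin} with $a_n = \mathbf{1}_{n \leq x}$ to get $\#\{n \leq x : n \in \N_{\leq R}\} \ll \log R \cdot x^{1-1/\log R}$; since $\log x / \log R = 3 \log_2 x$, this is $\ll x/(\log_2 x \log^2 x)$. For case (iv), \Cref{rankin} applied to $a_d = d^{-1} \mathbf{1}_{d > D}$ yields $\sum_{d > D,\, d \in \N_{\leq L}} 1/d \ll \log L \cdot D^{-1/\log L}$; since $\log D / \log L = \log_2^2 x/10$, the factor $D^{-1/\log L}$ decays faster than any power of $\log x$, so this case contributes $\ll x \log_2 x \exp(-\log_2^2 x/10)$, which is negligible.

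For case (vi), the plan is the direct bound
$$ \#\{n \leq x \text{ of type (vi)}\} \leq \sum_{\substack{R/L^2 \leq p_3 \leq p_2 \leq p_1 \\ p_1 \leq p_3 L^2}} \frac{x}{p_1 p_2 p_3}, $$
where the sum ranges over triples of primes. Summing successively over $p_1 \in [p_2, p_3 L^2]$ and $p_2 \in [p_3, p_3 L^2]$ via \Cref{mertens} each contributes a factor $\ll \log L / \log p_3$, while the remaining sum $\sum_{p_3 \geq R/L^2} 1/(p_3 \log^2 p_3)$ is $\ll 1/\log^2 R$ by partial summation (using \Cref{prime}). The combined bound is $\ll x \log^2 L/\log^2 R \asymp x \log_2^4 x / \log^2 x$, within budget.

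For case (v), the fact that the ``small part'' $d$ is $L$-smooth prevents the clean decoupling used in (vi). I would instead fix $d \in \N_{\leq L}$, set $Y \coloneqq x/d$, and split the range of $p_2$ into $[R/L, \sqrt{Y/L}\,]$ (where the constraint $p_1 \leq p_2 L$ binds) and $(\sqrt{Y/L}, \sqrt{Y}\,]$ (where the constraint $p_1 \leq Y/p_2$ binds). \Cref{prime} bounds the number of admissible $p_1$ per $p_2$ by $O(p_2 L/\log p_2)$ in the first range and $O(Y/(p_2 \log(Y/p_2)))$ in the second. Since $Y \geq (R/L)^2$ (as $p_1 p_2 \geq (R/L)^2$), all logarithmic denominators arising are $\gg \log R$; summing over primes $p_2$ with \Cref{mertens} and standard partial summation then gives a total of $\ll Y \log L/\log^2 R$ per $d$, and a final sum over $d \in \N_{\leq L}$ using $\sum 1/d \ll \log L$ produces $\ll x \log^2 L/\log^2 R \asymp x \log_2^4 x/\log^2 x$. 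Combining all six contributions yields $\#E \ll x \log_2^4 x / \log^2 x$, within the required $O(x \log_2^5 x/\log^2 x)$. The main technical obstacle is the bookkeeping in case (v): one must verify that every logarithmic denominator is $\gg \log R$ (not merely $\gg \log L$), since it is the sharp contrast $\log R \gg \log L$ that converts the cheap estimate $\log^2 L$ in the numerator into an acceptable $\log_2^4 x / \log^2 x$.
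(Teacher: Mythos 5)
Your proposal is correct and follows essentially the same route as the paper: trivial bounds for (i) and (iii), Rankin's trick for (ii) and (iv), and the same fix-the-small-variables-and-count-primes strategy for (v) and (vi), with all logarithmic denominators correctly tracked as $\gg \log R$. The only (harmless) divergence is in case (vi), where you keep the $1/\log^2 p_3$ weight inside the $p_3$-sum and evaluate it by partial summation, saving the factor of $\log_2 x$ that the paper concedes by bounding $\sum_{p_3 \leq x} 1/p_3 \ll \log_2 x$ separately; this is why you land at $\log_2^4 x$ rather than $\log_2^5 x$, which is of course still within the stated bound.
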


\begin{proof}  It suffices to show that the set of $n \in [x]$ obeying each of the six cases (i)-(vi) of $E$ has size $O( \frac{x \log^5_2 x}{\log^2 x})$.

The set of $n$ obeying (i) clearly has size $O(x/L)$, which is acceptable (with plenty of room to spare).  By Lemma \ref{rankin}, the set of $n$ obeying (ii) has size at most\footnote{One can in fact obtain much stronger bounds here using \cite{deb}.}
\begin{align*}
    \sum_{n \in \N_{\leq R}: n \leq x} 1 \ll x^{1-\frac{1}{\log R}} \log R
\end{align*}
which is also acceptable (with room to spare) from  \eqref{R-def}.  The set of $n$ obeying (iii) has size at most
$$ \sum_{L < d \leq x^{1/2}} \frac{x}{d^2} \ll \frac{x}{L}$$
which is again acceptable (with plenty of room to spare).

The set of $n$ obeying (iv) has size at most
$$\sum_{d \in \N_{\leq L}: d > D} \frac{x}{d}.$$
Applying Lemma \ref{rankin}, we can bound this by
$$ \ll x \frac{\log L}{D^{1/\log L}},$$
which is acceptable (with substantial room to spare) from \eqref{L-def}, \eqref{D-def}.

Now suppose that $n$ obeys (v), thus
$$ n = d p_2 p_1$$
where $d \in \N_{\leq L}$, and $p_1, p_2$ are primes with
$$ \frac{R}{L} \leq p_2 \leq p_1 \leq \min \left( \frac{x}{d p_2}, p_2 L \right).$$
In particular $p_2 \leq \sqrt{\frac{x}{d}}$.  We may also assume that $d \leq D$, since otherwise we are in case (iv).

By the prime number theorem (Lemma \ref{prime}), we see that for any choice of $d, p_2$, the number of possible values of $p_1$ is at most
$$ \ll \frac{1}{\log \frac{R}{L}} \min\left( \frac{x}{d p_2}, p_2 L\right).$$
We conclude that this contribution to the size is at most
$$ \ll \frac{1}{\log \frac{R}{L}} \sum_{d \in \N_{\leq L}: d \leq D} \sum_{p_2 \leq \sqrt{\frac{x}{d}}}
\min\left( \frac{x}{d p_2}, p_2 L\right).$$
From the prime number theorem (Lemma \ref{prime}) we have
\begin{align*}
 \sum_{p_2 < \sqrt{\frac{x}{dL}}} p_2 L &\leq L \sqrt{\frac{x}{dL}} \pi\left( \sqrt{\frac{x}{dL}} \right) \\
&\ll L \frac{x}{dL} \frac{1}{\log \frac{x}{dL}} \\
&\leq \frac{1}{\log \frac{x}{DL}} \frac{x}{d}
\end{align*}
while from Lemma \ref{mertens} we have
$$ \sum_{\sqrt{\frac{x}{dL}} \leq p_2 < \sqrt{\frac{x}{d}}} \frac{x}{dp_2} \ll \frac{\log L}{\log \frac{x}{DL}} \frac{x}{d}$$
and so by we can bound the previous contribution to $\# A_3$ by
$$ \ll  \frac{x \log L}{\log \frac{R}{L} \log \frac{x}{DL}} \sum_{d \in \N_{\leq L}} \frac{1}{d}$$
which by\footnote{Alternatively, we can use the Euler product $\sum_{d \in \N_{\leq L}} \frac{1}{d} = \prod_{p\leq L} (1-\frac{1}{p})^{-1}$ followed by Mertens' theorem.} Lemma \ref{rankin} is bounded by
$$ \ll  \frac{x \log^2 L}{\log \frac{R}{L} \log \frac{x}{DL}} .$$
This is acceptable thanks to \eqref{L-def}, \eqref{D-def}, \eqref{R-def}.

Finally, if $n$ obeys (vi), then we may factor
$$ n = d p_3 p_2 p_1$$
where $p_1,p_2,p_3$ are primes in $[x]$ with
$$ \frac{R}{L^2} \leq p_3 \leq p_2 \leq p_1 \leq p_3 L^2.$$
If we fix $p_1,p_2,p_3$, there are at most $\frac{x}{p_1 p_2 p_3}$ choices of $d$, thus this contribution to the size is at most
$$ \sum_{\frac{R}{L^2} \leq p_3 \leq x} \sum_{p_2,p_1: p_3 \leq p_2 \leq p_1 \leq p_3 L^2} \frac{x}{p_1 p_2 p_3}.$$
By two applications of Lemma \ref{mertens}, this is
$$ \ll \frac{x \log^2(L^2)}{\log^2(R/L^2)} \sum_{p_3 \leq x} \frac{1}{p_3},$$
which is acceptable thanks to Mertens' theorem and \eqref{R-def}, \eqref{L-def}.
\end{proof}

We remark that one could obtain much sharper asymptotics on the size of $E$ if desired using more sophisticated results from the anatomy of integers, such as \cite{ford-a}.

Now we control $A_2$.

\begin{proposition}[$A_2$ negligible]\label{a5}  We have $M(A_2) \ll \frac{x}{\log^2 x}$.
\end{proposition}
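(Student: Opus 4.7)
The strategy is to exploit the approximation $\varphi(n) \approx \frac{\varphi(d)}{d}(1 - 1/p)n$ for $n = dps \in A_2$. Multiplicativity of $\varphi$ combined with the fact that $s$ is almost prime with all prime factors exceeding $pL$ gives $\varphi(s)/s = 1 + O(1/(pL))$, and since $p > L$ and $\varphi(d)/d \leq 1$, we obtain
\[ \varphi(n) = \frac{\varphi(d)}{d}\Bigl(1 - \frac{1}{p}\Bigr) n + O(n/L^2). \]

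Let $B \subseteq A_2$ be a subset with $\varphi|_B$ nondecreasing. The plan is to partition $B = \bigsqcup_d B_d$ by the value of $d$ in the factorization $n = dps$ and bound each $\#B_d$ separately. For consecutive elements $n_i = d p_i s_i < n_{i+1} = d p_{i+1} s_{i+1}$ of $B_d$, the monotonicity condition $\varphi(n_i) \leq \varphi(n_{i+1})$ combined with the approximation above and the crude bound $d/\varphi(d) \ll \log_2 x \ll L$ (used to absorb the $n/L^2$ error) rearranges to
\[ n_{i+1} - n_i \geq \frac{n_{i+1}}{p_{i+1}} - \frac{n_i}{p_i} - O(n_{i+1}/L). \]
From this inequality I would derive a quasi-monotonicity statement: on windows of $n$ of length less than roughly $n/p$, the prime $p$ cannot drop by more than a bounded factor (since a significant drop in $p$ would make the right-hand side exceed $n_{i+1}-n_i$), so the dyadic range of $p$ is essentially nondecreasing as $n$ traverses $B_d$.

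Next, I would count elements of $B_d$ via a dyadic decomposition in both the scale $N$ of $n$ and the dyadic range $(P, 2P]$ of $p$. For fixed $(d, P, N)$, Lemma~\ref{prime} gives $\ll P/\log P$ choices of prime $p \sim P$; for each such $p$, an almost-prime count (via Lemma~\ref{mertens} applied to primes and semiprimes $s$ with prime factors $>pL$) bounds the number of valid $s$ with $dps \in (N, 2N]$ by $\ll N/(dP \log(PL))$. The quasi-monotonicity then restricts which $(N, P)$-cells of $B_d$ can simultaneously carry elements, yielding the additional $\log$-saving needed. Summing over cells, then over $d$ using Rankin's trick (Lemma~\ref{rankin}) to handle the constraint $d \in \N_{<p}$, and using the calibration $L = \log^{10} x$ from \eqref{L-def} so that $\log L$ and $\log_2 x$ are comparable, should combine to give $\#B \ll x/\log^2 x$.

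The main obstacle is quantifying the quasi-monotonicity saving rigorously: without it, even the naive enumeration of $(d, p, s)$ triples in $A_2$ exceeds $x/\log^2 x$, so the entire $\log^{-2} x$ improvement must come from the monotonicity constraint pruning admissible configurations. Making this precise requires careful tracking of how the dyadic $(d, N, P)$-cells of $B_d$ interleave under the quasi-monotonicity, and relies on the wide separation of the scales $L, D, R$ in \eqref{l-scale} to absorb the polynomial losses in $L$ and $D$ that show up in the error terms of Lemmas \ref{prime} and \ref{mertens}.
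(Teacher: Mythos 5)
You have correctly identified the paper's strategy: fix $d$, use $\varphi(n) \approx \frac{\varphi(d)}{d}(1-\frac{1}{p})n$, and let the monotonicity of $\varphi$ force $p$ to be quasi-monotone as a function of $n$, then count. But two quantitative choices in your plan break the argument. First, you record the error from the almost-prime factor $s$ as $O(n/L^2)$ additively, which after dividing by $\varphi(d)/d$ becomes the $O(n_{i+1}/L)$ term in your displayed inequality. The correct form keeps the $1/p$ scaling: $\varphi(s)/s = 1+O(1/(pL))$ gives $\varphi(n) = \frac{\varphi(d)}{d}\bigl(1 - \frac{1+O(1/L)}{p}\bigr)n$, i.e.\ an error of size $O(n/(pL))$. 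The signal distinguishing $p$ from $p'$ is $n\bigl|\frac{1}{p}-\frac{1}{p'}\bigr| \leq \frac{n}{p}$, so with an error term $O(n/L)$ that does not decay in $p$, the inequality you derive is vacuous as soon as $p \gg L$ (and $p$ ranges up to $x^{1/2}$). Everything — the error, the window width in $n$, and the resolution in $p$ — must be calibrated proportionally to $1/p$.

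Second, and more fundamentally, dyadic resolution in $p$ cannot deliver the required saving. Even in the most favorable reading of your quasi-monotonicity (each window of $n$ meets only one dyadic range $(P,2P]$ of $p$, with that range nondecreasing), the count you get for fixed $d$ is
$$ \sum_i \sum_{P < p \leq 2P} \left( \frac{|I_i|}{dp} + 1 \right) \ll \frac{1}{d\log P}\sum_i |I_i| + \cdots \ll \frac{x}{d \log_2 x}, $$
since $P$ may be as small as $L = \log^{10}x$ so that $\log P \asymp \log_2 x$; summing over $d$ leaves you at $x/\log_2 x$ rather than $x/\log^2 x$. Pinning $p$ to a dyadic block per window of $n$ saves only a bounded factor (a factor $\log_2 x$ if one invokes primality) over the trivial count, and no interleaving of $(N,P)$-cells recovers a power of $\log x$. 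The paper's proof gets the saving by making the resolution in $p$ far finer than dyadic: each dyadic block $(2^{m-1},2^m]$ is cut into $O(\log^4 x)$ intervals $J_k$ of relative width $\log^{-4}x$, and the windows $I_i$ in $n$ are taken of relative width $(2^m\log^5 x)^{-1}$ — strictly finer than the $1/p$ scale you propose, so that the variation of $n$ across $I_i$ is negligible compared with the signal $\frac{n}{p\log^4 x}$ separating adjacent $J_k$'s. Monotonicity then confines $p$ to essentially one $J_k$ per $I_i$, replacing $\sum_{p \sim 2^m} \frac{1}{p} \ll 1$ by $\sum_{p \in J_k}\frac{1}{p} \ll \log^{-4}x$; this $\log^4 x$ saving is what survives the $\log^2 x$ loss from the sums over $d$ and over the dyadic ranges of $p$. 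Without refining the $p$-resolution (and correspondingly the $n$-windows and the error term) beyond dyadic, the bound $M(A_2)\ll x/\log^2 x$ is out of reach by your route.
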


\begin{proof}  Let $A'$ be a subset of $A_2$ on which $\varphi$ is nondecreasing.  Our task is to show that
$$ \# A' \ll \frac{x}{\log^2 x}.$$
This will be the first time we will use the monotonicity of $\varphi$ in our arguments; on the other hand, there is a lot of room here, and we can afford to lose several powers of $\log x$ in what follows. If $n \in A'$, then we may factor
$$ n = d p s$$
where $p$ is a prime in the range
$$ L < p \leq x^{1/2},$$
$d$ is an element of $\N_{<p}$ with $d \leq \frac{x}{p^2 L}$, and $s$ is an almost prime, with all prime factors larger than $p L$.  In particular
$$ \varphi(s) = \left(1 - O\left(\frac{1}{p L}\right)\right) s$$
and thus by multiplicativity
\begin{align}
\varphi(n) &= \varphi(d) \varphi(p) \varphi(s) \nonumber \\
&= \frac{\varphi(d)}{d} \left(1 - \frac{1 + O(\frac{1}{L})}{p}\right) n.\label{varphin}
\end{align}
We exploit this relation as follows.  First we perform dyadic decomposition to estimate
$$
\# A' \leq \sum_{m: L < 2^m \ll x^{1/2}} \sum_{d \in \N_{<2^m}: d \ll \frac{x}{2^{2m}L} } a(m,d)$$
where
$$ a(m,d) \coloneqq \sum_{2^{m-1} < p \leq 2^m}  \sum_{s \leq \frac{x}{dp}: dp s \in A'; \eqref{varphin}} 1$$
and the inner sum is restricted to those $s$ for which $n = dp s$ obeys the estimate \eqref{varphin}.  We will establish the estimate
\begin{equation}\label{md}
a(m,d) \ll \frac{x}{d \log^4 x}
\end{equation}
for each $m$ with $L < 2^m \ll x^{1/2}$ and all $d \in \N_{<2^m}$; the claim will then follow by summing \eqref{md} over all $d,m$ and using crude estimates (or Lemma \ref{rankin}).

It remains to establish \eqref{md}.  We fix $m,d$.
We cover the interval $(1,x]$ by $O(2^m\log^6 x)$ consecutive disjoint intervals $I_i$ of the form $(N, (1 + \frac{1}{2^m \log^5 x}) N]$ with $1 \leq N \ll x$.  We similarly cover the interval $(2^{m-1}, 2^m]$ by $O(\log^4 x)$ consecutive disjoint intervals $J_k$ of the form $(M, (1 + \frac{1}{\log^4 x})M]$ with $M \asymp 2^m$.  For each $I_i$ and $J_k$, let $I_{i,k}$ be the smallest interval containing all the $n \in I_i$ of the form $n = dp s$, where $p, s$ contribute to \eqref{md} and $p \in J_k$.  (We allow $I_{i,k}$ to be empty, or a single point.)  Then we can bound
\begin{equation}\label{md-2}
a(m,d) \leq
\sum_i \sum_k \sum_{p \in J_k}
\sum_{s: dps \in I_{i,k}} 1.
\end{equation}
Since $I_{i,k} \subset I_i$ and the $I_i$ are disjoint, then $I_{i,k}$ and $I_{i',k'}$ are disjoint for $i \neq i'$.
Now we exploit the monotonicity of $\varphi$ on $A'$ via a key observation: if $k' \geq k+2$, then $I_{i,k'}$ lies strictly to the right of $I_{i,k}$ (with the convention that this claim is vacuously true if at least one of $I_{i,k}$, $I_{i,k'}$ is empty).  Indeed, suppose that $n = d p s \in I_{i,k}$ and $n' = d p' s' \in I_{i,k'}$ contribute to \eqref{md} with $p \in J_k$ and $p' \in J_{k'}$.  From \eqref{varphin} we have
$$
\varphi(n) = \frac{\varphi(d)}{d} \left(1 - \frac{1 + O(\frac{1}{L})}{p}\right) n$$
and
$$
\varphi(n') = \frac{\varphi(d)}{d} \left(1 - \frac{1 + O(\frac{1}{L})}{p'}\right) n'$$
On the other hand, since $n, n' \in I_i$ and $p' \asymp 2^m$, we have
$$ n' = \left(1 + O\left(\frac{1}{p' \log^5 x}\right)\right) n$$
and thus (by \eqref{L-def})
$$
\varphi(n') = \frac{\varphi(d)}{d} \left(1 - \frac{1 + O(\frac{1}{\log^5 x})}{p'}\right) n.$$
Since $p \in J_k$, $p' \in J_{k'}$ and $k' \geq k+2$ we have
$$ p' \geq \left(1 + \frac{1}{\log^4 x}\right) p.$$
Putting these estimates together (and recalling that $x$ is assumed to be large), we conclude that
$$ \varphi(n') > \varphi(n)$$
and hence by monotonicity of $\varphi$ on $A'$ we have $n' > n$.  This gives the claim.

As a corollary of this monotonicity, we see that the intervals $I_{i,k}$ have bounded overlap in $[1,x]$ as $i, k$ vary (in fact any $n$ can belong to at most two of the $I_{i,k}$).  In particular,
\begin{equation}\label{ikx}
\sum_i \sum_k |I_{i,k}| \ll x.
\end{equation}

We have the\footnote{One could save an additional factor of $\log x$ or so here by using a Brun--Titchmarsh inequality for almost primes, but this turns out not to significantly improve the bounds in our main result.} trivial bound
$$ \sum_{s: dp s \in I_{i,k}} 1 \ll \frac{|I_{i,k}|}{dp} + 1$$
and hence by \eqref{md-2} we may bound
$$
a(m,d) \ll \sum_i \sum_k \sum_{p \in J_k} \left( \frac{|I_{i,k}|}{dp} + 1 \right).$$
Since there are only $O(2^m \log^6 x)$ values of $i$, and the $J_k$ cover an interval $\{ y: y \asymp 2^m\}$ with bounded overlap, we can crudely estimate
$$ \sum_i \sum_k \sum_{p \in J_k} 1 \ll 2^{2m} \log^6 x$$
which since $d \ll \frac{x}{2^{2m} L}$ implies that
$$ \sum_i \sum_k \sum_{p \in J_k} 1 \ll \frac{x}{d} \frac{\log^6 x}{L}.$$
Next, by discarding the requirement that $p$ is prime, we have the trivial bound
$$ \sum_{p \in J_k} 1 \ll \frac{2^m}{\log^4 x} + 1;$$
since $p \asymp 2^m > L$ for $p \in J_k$, we conclude
$$ \sum_{p \in J_k} \frac{1}{dp} \ll \frac{1}{d} \left( \frac{1}{\log^4 x} + \frac{1}{L} \right).$$
By \eqref{ikx}, we conclude that
$$ \sum_i \sum_k \sum_{p \in J_k} \frac{|I_{i,k}|}{dp} \ll \frac{x}{d} \left( \frac{1}{\log^4 x} + \frac{1}{L} \right).$$
Combining these estimates, we obtain
$$ a(m,d) \ll \frac{x}{d} \left( \frac{1}{\log^4 x} + \frac{\log^6 x}{L} \right)$$
and \eqref{md} follows from \eqref{L-def}.
\end{proof}

Finally, we treat the primary set $A_1$.

\begin{proposition}[$A_1$ contribution]\label{a7}  We have $M(A_1) \leq \left(1 + O\left( \frac{\log_2 x}{\log x} \right) \right) \frac{x}{\log x}$.
\end{proposition}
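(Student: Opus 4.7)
The plan is to take an arbitrary subset $A' \subseteq A_1$ on which $\varphi$ is nondecreasing and show $\#A' \leq (1 + O(\log_2 x/\log x)) x/\log x$. For each $n \in A_1$, the factorization $n = dp$ with $d \in \N_{\leq L}$, $d \leq D$ and $p$ prime is unique (since $p \geq x/(dL) \geq x/(DL) > L$ exceeds every prime factor of $d$), so the rational $q(n) := \varphi(d)/d$ is well-defined. The identity
\[
\varphi(n) = \varphi(d)(p-1) = q(n)(n - d), \qquad n = dp \in A_1,
\]
together with $d/n \leq DL/x$, gives the extremely tight approximation $\varphi(n) = q(n)\,n\,(1 + O(DL/x))$.

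The first step I would take is to force $n \mapsto q(n)$ to be \emph{exactly} nondecreasing on short intervals. Cover $(x/L, x]$ by disjoint multiplicative windows $I_j = (y_j, (1+\delta)y_j]$. For $n_1 < n_2$ in $A' \cap I_j$, monotonicity of $\varphi$ combined with the identity above yields $q(n_1) \leq q(n_2)(1 + O(\delta))$. Now choose $\delta$ strictly smaller than the minimum positive gap between distinct values of $\varphi(d)/d$ for $d \leq D$: such rationals have denominators $\leq D$, so distinct values differ by at least $1/D^2$, and I may take $\delta := 1/(3D^2)$. With this choice the only way to satisfy the approximate inequality is $q(n_1) \leq q(n_2)$, so $q$ is \emph{exactly} nondecreasing on each $A' \cap I_j$.

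Next I would count $|A' \cap I_j|$ by grouping according to $q$-value. Since $q$ is nondecreasing on $A' \cap I_j$, elements sharing a value $q$ occupy a contiguous sub-interval $I_{j,q} \subseteq I_j$ of relative width $\delta_q$, with $\sum_q \delta_q \leq \delta$. Counting primes $p$ with $dp \in I_{j,q}$ for each $d$ via \Cref{prime}, and bounding the inner sum via \Cref{prelim}, gives
\[
|A' \cap I_{j,q}| \leq \Bigl(\sum_{d:\, \varphi(d)/d = q} \tfrac{1}{d}\Bigr) \cdot \frac{\delta_q y_j}{\log y_j} + (\text{error}) \leq \frac{\delta_q y_j}{\log y_j} + (\text{error}).
\]
Summing over $q$ with $\sum_q \delta_q \leq \delta$, then over $j$, and using $\sum_j \delta y_j / \log y_j \leq \int_{x/L}^x dy/\log y \leq x/\log x + O(x/\log^2 x)$, produces the desired main term.

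The main obstacle is the cumulative error analysis. Two sources need care: (i) the Taylor expansion $1/\log(y_j/d) = 1/\log y_j + \log d/\log^2 y_j + \cdots$, whose per-interval correction involves $\sum_{d \in \mathcal{D}_q} \log d / d$; using the product structure from \Cref{exact} one sees this quantity equals $(\sum_d 1/d) \cdot \sum_{p \in \mathcal{P}_q} (\log p)\, p/(p-1)$ and is uniformly bounded in $q$ by an absolute constant (the maximum being attained around the support $\{2,3\}$), so the total contribution is only $O(x/\log^2 x)$; and (ii) the prime-counting error $O(y_j \exp(-c\sqrt{\log y_j}))$ per pair $(j,d)$: even with the roughly $D^2 \log L$ windows forced by $\delta = 1/(3D^2)$, and up to $D$ values of $d$ per window, the huge separation $D \ll \exp(\sqrt{\log x})$ guaranteed by \eqref{l-scale} keeps the total PNT error $o(x/\log^2 x)$, which is comfortably within budget.
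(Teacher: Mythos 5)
Your proposal is correct and follows essentially the same route as the paper's proof: cover $(x/L,x]$ by multiplicatively short windows whose width is small compared with the minimal gap $\geq 1/D^2$ between distinct values of $\varphi(d)/d$, use monotonicity of $\varphi$ together with the tight approximation $\varphi(n)=q(n)n(1+O(DL/x))$ to force the $q$-values to be nondecreasing within each window (so the associated sub-intervals are disjoint), and then count primes via Lemma~\ref{prime} and invoke Proposition~\ref{prelim}. The only differences are cosmetic --- window width $1/(3D^2)$ versus the paper's $1/D^3$, and your slightly more refined treatment of the $\log d$ correction in the prime count, where the paper simply absorbs it into a uniform $1+O(\log(DL)/\log x)$ factor.
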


\begin{proof}  Let $A'$ be a subset of $A_1$ on which $\varphi$ is nondecreasing.  Our task is to show that
\begin{equation}\label{a7-task}
\# A' \leq \left(1 + O\left( \frac{\log_2 x}{\log x} \right) \right)  \frac{x}{\log x}.
\end{equation}

If $n \in A'$, then we may factor
$$ n = dp$$
where $d \in \N_{\leq L}$ with
\begin{equation}\label{d-upper}
d \leq D
\end{equation}
and $p$ is a prime which then lies in the range
\begin{equation}\label{p1-lower}
 \frac{x}{DL} \leq p \leq \frac{x}{d}.
\end{equation}
In particular (from \eqref{D-def}) we certainly have
\begin{equation}\label{p1-lower-2}
p \geq D^3.
\end{equation}
Thus $p$ is coprime to $d$, and hence
\begin{align}
\varphi(n) &= \varphi(d) \varphi(p)\nonumber \\
&= \frac{\varphi(d)}{d} \left( 1 - \frac{1}{p} \right) n.\label{vnd}
\end{align}
If we define the set
$$ {\mathcal Q} \coloneqq \left\{ \frac{\varphi(d)}{d}: d \in \N_{\leq L}; d \leq D \right\}$$
then ${\mathcal Q}$ is a finite set of rationals in $(0,1]$, of some cardinality $K = \# {\mathcal Q}$ with
\begin{equation}\label{KB}
K \leq D.
\end{equation}
We arrange the elements of ${\mathcal Q}$ in increasing order as
$$ 0 < q_1 < q_2 < \dots < q_K \leq 1.$$
For each $k=1,\dots,K$, we let ${\mathcal D}_k$ denote the set of all $d \in \N_{\leq L}$ with $d \leq D$ and
\begin{equation}\label{varphi-ratio}
\frac{\varphi(d)}{d} = q_k.
\end{equation}
We also partition $[\frac{x}{L}, x]$ into $O(D^3 \log L)$ disjoint intervals $I_i$ of the form $(N, (1+O(\frac{1}{D^3})) N]$.  For $k=1,\dots,K$, and any $i$, let $I_{i,k}$ denote the smallest interval containing all $n=dp$ in $A' \cap I_i$ with $d \in {\mathcal D}_k$ and $p$ a prime obeying \eqref{p1-lower}.  Then we have
\begin{equation}\label{a7p}
\# A' \leq \sum_i \sum_{k=1}^K \sum_{d \in {\mathcal D}_k} \sum_{p: d p \in I_{i,k}} 1.
\end{equation}
The key observation is that the intervals $I_{i,k}$ are all disjoint.  To see this, first observe that since $I_{i,k} \subset I_i$ and the $I_i$ are disjoint, we clearly have $I_{i,k} \cap I_{i',k'} = \emptyset$ if $i \neq i'$.  Next, we claim that if $1 \leq k < k' \leq K$ then $I_{i,k'}$ lies strictly to the right of $I_{i,k}$ (so in particular the two sets are disjoint).  Indeed, if $n = d p \in I_{i,k}$ and $n' = d' p' \in I_{i,k'}$ then from \eqref{vnd}, \eqref{p1-lower-2}, \eqref{varphi-ratio} one has
$$ \varphi(n) = \frac{\varphi(d)}{d} \left(1 - \frac{1}{p}\right)n = q_k \left(1 - O\left(\frac{1}{D^3}\right)\right)n.$$
Similarly we have
$$ \varphi(n') = q_{k'}\left(1 - O\left(\frac{1}{D^3}\right)\right) n'.$$
Also, as $n,n'$ both lie in $I'$, we have
$$ n' = \left(1 + O\left(\frac{1}{D^3}\right)\right) n,$$
while since $k' > k$, we have $q_{k'} > q_k$.  From \eqref{d-upper}, the rational numbers $q_{k'}, q_k$ have denominator at most $D$, hence
$$ q_{k'} - q_k \geq \frac{1}{D^2}$$
and thus (since $q_k \leq 1$)
$$ q_{k'} \geq \left(1 + \frac{1}{D^2}\right) q_k.$$
Combining all these statements (and recalling that $x$ and hence $D$ is large), we conclude that
$$ \varphi(n') > \varphi(n)$$
and hence $n' > n$ by monotonicity, giving the claim.

From the prime number theorem (Lemma \ref{prime}) and \eqref{p1-lower} we have
\begin{align*}
\sum_{p: d p \in I_{i,k}} 1 &\leq \int_{\frac{1}{d} I_{i,k}} \frac{\mathrm{d}t}{\log t} + O\left( \frac{x}{d} \exp\left(-c \sqrt{\log \frac{x}{d}}\right) \right)\\
&\leq \left(1 + O\left( \frac{\log(DL)}{\log x} \right) \right)  \frac{|I_{i,k}|}{d \log x} + O\left( \frac{x}{d} \exp\left(-c \sqrt{\log \frac{x}{D}}\right) \right)
\end{align*}
for an absolute constant $c>0$. From Lemma \ref{prelim} we have the crucial inequality
$$ \sum_{d \in {\mathcal D}_k} \frac{1}{d} \leq 1.$$
Inserting these bounds back into \eqref{a7p}, we conclude that
$$\# A' \leq \left(1 + O\left( \frac{\log(DL)}{\log x} \right) \right) \sum_i \sum_{k=1}^K \frac{|I_{i,k}|}{\log x} + O\left( \sum_i \sum_{k=1}^K x \exp\left(-c \sqrt{\log \frac{x}{D}}\right) \right).$$
From disjointness of the $I_{i,k}$ we have
$$ \sum_i \sum_{k=1}^K |I_{i,k}| \leq x.$$
Since there are $O(D^3 \log L)$ values of $i$, we conclude
$$\# A' \leq \left(1 + O\left( \frac{\log(DL)}{\log x} \right) \right) \frac{x}{\log x} + O\left( x K D^3 (\log L) \exp\left(-c \sqrt{\log \frac{x}{D}}\right) \right)$$
and \eqref{a7-task} follows from \eqref{KB}, \eqref{L-def}, \eqref{D-def}.
\end{proof}

Inserting Propositions \ref{prop-e}-\ref{a7} into \eqref{triangle-split}, we obtain Theorem \ref{thm:main} as claimed.

\section{Obstacles to improvement, variants, and open questions}\label{obstruct}

It seems likely that small improvements to Theorem \ref{thm:main}, such as the lowering of the exponent $5$ in the power of $\log_2 x$, could be achieved with minor modifications of the method.  However, we now present two obstacles that seem to prevent one from unconditionally establishing significantly stronger results, such as \eqref{mpi}, and pose some open problems that might still be achievable despite these obstacles.

\subsection{Legendre conjecture obstruction}

\emph{Legendre's conjecture} asserts that for every $n \geq 2$, there is a prime between $(n-1)^2$ and $n^2$.  This conjecture remains open even for sufficiently large $n$, and even if one assumes the Riemann hypothesis (RH); it is roughly equivalent to showing that the gap $p_{n+1}-p_n$ between consecutive primes is significantly smaller than $p_n^{1/2}$, whereas the best known bound is $O(p_n^{1/2} \log p_n)$ on RH \cite{cramer}, or $O(p_n^{0.525})$ unconditionally \cite{bhp}.  On the other hand, this conjecture is implied by the Cram\'er conjecture $p_{n+1}-p_n \ll \log^2 p_n$ \cite{cramer}; it has also been verified for $n \leq 2 \times 10^9$ \cite{silva}.

Legendre's conjecture is also open if one restricts $n$ to be prime.  This special case of the conjecture will need to be resolved if one wishes to establish \eqref{mpi}, due to the following implication:

\begin{proposition}\label{leg}  Suppose that Legendre's conjecture is false for infinitely many primes; that is to say, there exist infinitely many primes $p$ such that there is no prime between $(p-1)^2$ and $p^2$.  Then $M(x) - \pi(x)$ goes to infinity as $x \to \infty$.
\end{proposition}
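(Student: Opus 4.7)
The lower bound $M(x)\geq \pi(x)$ comes from taking the set of all primes in $[x]$, on which $\varphi(q)=q-1$ is (strictly) nondecreasing. The strategy is to show that for each ``bad'' prime $p$ (a prime with no prime in $((p-1)^2,p^2)$) with $p\leq \sqrt{x}$, one can insert the extra element $p^2$ into this set while preserving monotonicity of $\varphi$. If infinitely many bad primes exist, this produces arbitrarily many insertions for large $x$, forcing $M(x)-\pi(x)\to\infty$.

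The key numerical input is $\varphi(p^2)=p(p-1)=p^2-p$. Consider the set
\[
S_x \coloneqq \{\text{primes }q\leq x\}\cup\{p^2 : p\text{ bad},\ p\leq \sqrt{x}\}\subset[x].
\]
The plan is to verify $\varphi$ is nondecreasing on $S_x$ by checking the four case types for $a<b$ in $S_x$: (i) if both are primes, this is immediate; (ii) if $a$ is prime and $b=p^2$, then by the ``bad'' hypothesis the prime $a<p^2$ must satisfy $a\leq (p-1)^2$, giving $\varphi(a)=a-1\leq p^2-2p<p^2-p=\varphi(b)$; (iii) if $a=p^2$ and $b$ is prime with $b>p^2$, then $\varphi(b)=b-1\geq p^2>p^2-p=\varphi(a)$; (iv) if $a=p_1^2$ and $b=p_2^2$ with $p_1<p_2$, then $\varphi(b)=p_2(p_2-1)\geq (p_1+1)p_1>p_1^2-p_1=\varphi(a)$.

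With monotonicity verified, we conclude
\[
M(x)\geq \#S_x = \pi(x) + \#\{p\text{ bad} : p\leq \sqrt{x}\}.
\]
Under the hypothesis that there are infinitely many bad primes, the right-hand count exceeds any fixed $N$ once $x$ is large enough that $\sqrt{x}$ exceeds the $N$-th bad prime. Hence $M(x)-\pi(x)\to\infty$.

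The argument is entirely elementary — there is no real obstacle. The only point requiring a moment's care is case (ii), where one uses that a prime strictly less than $p^2$ cannot lie in the interval $((p-1)^2,p^2)$ by the bad-prime hypothesis, and hence must be at most $(p-1)^2$; this gives exactly the slack $\varphi(a)\leq p^2-2p < \varphi(p^2)$ needed. Everything else is immediate arithmetic.
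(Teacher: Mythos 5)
Your proposal is correct and follows essentially the same route as the paper: for each bad prime $p$ one inserts $p^2$ into the set of primes, using $\varphi(p^2)=p^2-p$ together with the absence of primes in $((p-1)^2,p^2)$ to verify monotonicity. Your case analysis (including the comparison of two inserted squares, which the paper leaves implicit) is sound, so there is nothing to fix.
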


Stated contrapositively: if \eqref{mpi} holds, then Legendre's conjecture is true for all sufficiently large primes.

\begin{proof}  If $p$ is a prime counterexample to Legendre's conjecture, then for primes $p'$ larger than $p^2$ we have
$$ \varphi(p') = p'-1 \geq p^2 > \varphi(p^2)$$
and for primes $p'$ less than $p^2$, we have $p' < (p-1)^2$ and hence
$$ \varphi(p') < p' < (p-1)^2 < p^2-p = \varphi(p^2).$$
Thus one can insert $p^2$ into the sequence of primes without disrupting the monotonicity of $\varphi$.  If there are infinitely many counterexamples to Legendre's conjecture at the primes, then one can then insert an infinite number of additional elements into the primes to create a larger sequence with $\varphi$ nondecreasing, giving the claim.
\end{proof}

\begin{remark} One can replace Legendre's conjecture here with the very slightly stronger conjecture of Oppermann \cite{oppermann} that there is always a prime between $n(n-1)$ and $n^2$ for $n \geq 3$.  (Oppermann also made the analogous conjecture concerning $n^2$ and $n(n+1)$, but that is of less relevance here.)
\end{remark}

\begin{remark} With a little more computation, one can show that the conjecture \eqref{pi-64} implies that Legendre's conjecture holds at \emph{all} primes (not just sufficiently large ones).
\end{remark}

\begin{remark} In \cite{selberg}, the estimate
$$ \sum_{n \leq x} \frac{(p_{n+1}-p_n)^2}{p_n} \ll \log^3 x$$
is proven for large $x$ assuming the Riemann hypothesis, where $p_n$ denotes the $n^{\mathrm{th}}$ prime.  One consequence of this is that the counterexamples to Legendre's conjecture are quite rare on RH, since each such counterexample contributes $\gg 1$ to this sum.  Thus this obstruction only worsens the gap between $M(x)$ and $\pi(x)$ by at most $O(\log^3 x)$, on RH.
\end{remark}

\subsection{Dickson--Hardy--Littlewood conjecture obstruction}\label{dickson-sec}

The Dickson--Hardy--Littlewood conjecture \cite{dickson}, \cite{hardy} asserts, among other things, that if $a,b$ are coprime integers with $a>0$, that the number of primes $p \in [x]$ with $ap+b$ also prime is equal to
$$ ({\mathfrak S}_{a,b} + o(1)) \frac{x}{\log^2 x}$$
as $x \to \infty$ for some positive quantity ${\mathfrak S}_{a,b}$ (known as the \emph{singular series}) which has an explicit form which we will not give here.  Among other things, this conjecture (together with standard bounds for averages of singular series) would imply for any fixed natural number $k$ that the number of primes $p \in [x]$ such that the quantity $\lceil \frac{p}{2^k} \rceil$ (the first integer greater than or equal to $p/2^k$) is also prime is $\asymp \frac{x}{\log^2 x}$, if $x$ is sufficiently large depending on $k$.  Such results are out of reach of current technology; note for instance that the Dickson--Hardy--Littlewood conjecture implies the notoriously open twin prime conjecture.

The following result shows that a severe breakdown of the Dickson--Hardy--Littlewood conjecture would imply that the bounds in Theorem \ref{thm:main} cannot be improved (except perhaps for removing the factors of $\log_2 x$).

\begin{proposition}  Suppose that for all sufficiently large $x$, and all natural numbers $k$ with $2^k \leq 2\log x$, that the number of primes $p \in [x]$ with $\lceil \frac{p}{2^k} \rceil$ also prime is bounded by $O(\frac{x}{\log^2 x \log^3_2 x})$ (in contradiction with what the Dickson--Hardy--Littlewood conjecture would predict).  Then one has
$$ M(x) - \pi(x) \gg \frac{x}{\log^2 x}$$
for sufficiently large $x$.
\end{proposition}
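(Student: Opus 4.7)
The plan is to construct, under the failure hypothesis, a set $S \subseteq [x]$ on which $\varphi$ is nondecreasing and with $|S| - \pi(x) \gg x/\log^2 x$. The construction augments the primes with composites of the form $n = 2^k q$, where $q$ is an odd prime and $k \geq 1$ with $2^k \leq 2\log x$. These composites correspond to the equality case $q = \tfrac{1}{2}$ in Proposition \ref{prelim} (cf.\ Remark \ref{eq}), which is precisely the ``almost viable'' structure to be exploited here.

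For each such candidate $n$, adding $n$ to a set containing all primes forces every prime $p$ in the obstruction interval $(2^{k-1}(q-1),\, 2^k q)$ to be excluded, since otherwise $p < n$ combined with $\varphi(p) > \varphi(n)$ would violate monotonicity. Rearranging, $p$ obstructs $n = 2^k q$ iff $q$ is an integer in the interval $(p/2^k,\, p/2^{k-1}+1]$; the minimal such integer is $q = \lceil p/2^k \rceil$, and whether this integer is prime is precisely the condition counted by the Dickson--Hardy--Littlewood-type hypothesis.

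For the counting: under the hypothesis, the total number of coincidences summed over all admissible $k \leq K := \lfloor \log_2(2\log x) \rfloor$ is at most $O(K \cdot x/(\log^2 x \log_2^3 x)) = O(x/(\log^2 x \log_2^2 x))$, which is $o(x/\log^2 x)$. On the other hand, the total number of candidate composites $n = 2^k q \leq x$ across these scales is $\sum_k \pi(x/2^k) = \Theta(\pi(x))$. A suitable matching between primes and candidate insertions then allows one to add $\gg x/\log^2 x$ composites to $S$ while removing only $o(x/\log^2 x)$ primes, yielding the desired bound.

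The principal technical obstacle is that the obstruction interval $(2^{k-1}(q-1),\, 2^k q)$ is roughly twice as long as the sub-interval $((q-1)2^k,\, q \cdot 2^k]$ directly controlled by the hypothesis at scale $k$, so by Bertrand's postulate there generically exist ``extra'' obstructing primes in $(2^{k-1}(q-1),\, (q-1)2^k]$. The trick to handle these is a multi-scale argument: for $p$ in this extra portion, one verifies that $p/2^{k-1}$ lies in $(q-1, 2(q-1)]$, so $\lceil p/2^{k-1}\rceil$ lies in a range which (on average, by a union bound) identifies $p$ as a DHL-coincidence prime at some related smaller scale $k' < k$. Invoking the hypothesis uniformly across all $k \leq K$ then absorbs the full obstruction count. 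One must finally check the mutual $\varphi$-compatibility of the inserted composites at different scales, which can be arranged by partitioning $[x]$ into subranges each using a single scale $k$ and verifying monotonicity at the boundaries.
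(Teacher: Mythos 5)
Your overall aim --- exploiting the $q=\tfrac12$ equality case of Proposition \ref{prelim} via numbers of the form $2^kq$ --- is the right instinct, but the construction you build around it does not work, and the flaw is quantitative rather than technical. You propose to keep the full set of primes and insert composites $n=2^kq$, deleting only those primes that obstruct each insertion. The obstruction interval you correctly identify, $(2^{k-1}(q-1)+1,\,2^kq)$, has length $2^{k-1}(q+1)-1\approx n/2$: it is essentially all of $(n/2,n)$, and hence contains about $\tfrac{n}{2\log n}$ primes, every one of which must be deleted before $n$ can be inserted. Your claim that this interval is only ``roughly twice as long'' as $((q-1)2^k,\,q2^k]$ is off by a factor of about $q/2$: the latter has length $2^k\le 2\log x$, while the former has length $\sim n/2$. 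Equivalently, in your rearranged form, the integers $q$ obstructed by a given $p$ fill an interval of length $\approx p/2^k$, not just the single value $\lceil p/2^k\rceil$; the hypothesis controls only that one value and says nothing about the $\approx p/2^k$ others. Consequently each insertion costs $\gg n/\log n$ deletions and gains one element, so no matching argument can yield a net gain; and the proposed multi-scale rescue fails because $\lceil p/2^{k-1}\rceil$ merely lying in the range $(q-1,2(q-1)]$ does not make $p$ a coincidence prime at any scale.

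The paper's proof avoids this entirely by not retaining the primes at all. It takes $A$ to be the set of all $n=2^kp\le x$ with $1\le k\le k_0$ (where $\log x<2^{k_0}\le 2\log x$) and $p$ an odd prime. Every element satisfies $\varphi(n)=2^{k-1}(p-1)\approx n/2$, so monotonicity violations \emph{inside} $A$ are rare: a violating pair $2^kp<2^{k'}p'$ forces $k'>k$ and $p'=\lceil p/2^{k'-k}\rceil$, which is exactly the event the hypothesis bounds, and summing over the $O(\log_2^2x)$ pairs $(k,k')$ one deletes only $O(x/(\log^2x\,\log_2x))$ elements to make $\varphi$ nondecreasing. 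Meanwhile the prime number theorem with second-order term gives $\#A\ge \frac{x}{\log x}+\frac{(\log 4)x}{\log^2 x}-O\bigl(\frac{x\log_2 x}{\log^3 x}\bigr)$, and since $\log 4>1$ this exceeds $\pi(x)$ by $\gg x/\log^2x$. To salvage your argument you would need to replace ``augment the primes'' by this wholesale replacement of the primes.
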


This proposition suggests that significant progress on the Dickson--Hardy--Littlewood conjecture would need to be made in order to improve the error term in Theorem \ref{thm:main} to be better than $O(\frac{1}{\log x})$; in particular, this would need to be accomplished in order to have any hope of establishing \eqref{mpi}.  Note that standard probabilistic heuristics (cf. \cite{gallagher}) predict that for $2^k \leq 2 \log x$, a positive fraction of the intervals $[n, n+2^k]$ in $[x]$ will be devoid of primes, and so the set $\{ \lceil \frac{p}{2^k} \rceil: p \in [x] \}$ should avoid a positive density subset of $[x/2^k]$.  Thus, in the absence of the Dickson--Hardy--Littlewood conjecture  it is \emph{a priori} conceivable that the primes in $[x/2^k]$ mostly avoid such a set; we do not know how to rule out such a scenario with current technology.
We remark that the Maynard sieve \cite[Theorem 3.1]{maynard} (when combined with the calculations in \cite{polymath}; see also \cite[Lemma 2]{ford-b}) applied to the linear functions $n \mapsto 2^j n + 1$ for $j=1,\dots,50$ implies that there exists $1 \leq k \leq 49$ such that the number of primes $p$ in $[x]$ with $\lceil \frac{p}{2^k} \rceil$ also prime is $\gg \frac{x}{\log^{50} x}$ for infinitely many $x$, but this bound is not strong enough to contradict the hypothesis of the above proposition.

\begin{proof}  The construction here is motivated by the observation (see Remark \ref{eq}) that Proposition \ref{prelim} holds with equality not only at $q=1$, but also at $q=\frac{1}{2}$, as well as the construction discussed after Proposition \ref{prelim} in the introduction.  We thank Kevin Ford for pointing out an improvement to the argument that gave superior quantitative bounds.

Let $x$ be sufficiently large, and let $k_0$ be the unique natural number with
$$\log x < 2^{k_0} \leq 2 \log x.$$
Let $A$ be the set of all numbers $n \in [x]$ of the form $n = 2^k p$ where $1 \leq k \leq k_0$ and $p$ is an odd prime.  Observe from the prime number theorem (Lemma \ref{prime}) and the logarithmic integral asymptotic
$$ \int_2^x \frac{\mathrm{d}t}{\log t} = \frac{x}{\log x} + \frac{x}{\log^2 x} + O\left( \frac{x}{\log^3 x}\right)$$
followed by Taylor expansion that
\begin{align*}
\# A &= \sum_{k=1}^{k_0} (\pi(x/2^k) - 1) \\
&= \sum_{k=1}^{k_0} \left( \frac{x/2^k}{\log(x/2^k)} + \frac{x/2^k}{\log^2(x/2^k)} + O\left( \frac{x/2^k}{\log^3(x/2^k)}  \right) \right) \\
&= \sum_{k=1}^{k_0} \left( \frac{x}{2^k(\log x - k \log 2)} + \frac{x}{2^k(\log x - k \log 2)^2} + O\left( \frac{x}{2^k \log^3 x}  \right)  \right)\\
&= \sum_{k=1}^{k_0} \left( \frac{x}{2^k \log x} + \frac{(1 + k \log 2) x}{2^k \log^2 x} + O\left( \frac{k^2 x}{2^k \log^3 x}  \right) \right).
\end{align*}
We have $k_0 = O(\log_2 x)$,
$$ \sum_{k=1}^{k_0} \frac{1}{2^k} = 1 - \frac{1}{2^{k_0}} \geq 1 - \frac{1}{\log x}$$
and
$$ \sum_{k=1}^{k_0} \frac{k}{2^k} = 2 - O\left( \frac{k_0}{2^{k_0}} \right) = 2 - O\left( \frac{\log_2 x}{\log x} \right)$$
and thus
\begin{equation}\label{ap}
\# A \geq \frac{x}{\log x} + \frac{x \log 4}{\log^2 x} - O\left( \frac{x \log_2 x}{\log^3 x} \right).
\end{equation}
Note for comparison that
$$ \pi(x) = \frac{x}{\log x} + \frac{x}{\log^2 x} + O\left( \frac{x}{\log^3 x} \right).$$
Since $\log 4 > 1$, we see that $A$ is slightly denser than the primes.

Unfortunately, $\varphi$ is not quite nondecreasing on $A$; however, the exceptions to monotonicity are quite rare and can be described explicitly as follows.  Suppose that $2^k p$, $2^{k'} p'$ are elements of $A$ with $2^k p < 2^{k'} p'$ and $\varphi(2^k p) > \varphi(2^{k'} p')$.  Evaluating the totient functions, we obtain
$$ 2^{k-1} (p-1) > 2^{k'-1} (p'-1)$$
which we can rearrange as
$$ 2^{k'} p' - 2^k p < 2^{k'} - 2^k.$$
Since the left-hand side is positive, we must then have $k' > k$.  Dividing by $2^{k'}$, we conclude in particular that
$$ \frac{p}{2^{k'-k}} < p' < \frac{p}{2^{k'-k}}+1$$
and thus
$$ p' = \left\lceil \frac{p}{2^{k'-k}} \right\rceil.$$
By hypothesis, this scenario can only occur for $O(\frac{x}{\log^2 x \log^3_2 x})$ primes $p$ for each choice of $k,k'$.  Since there are at most $O(\log_2^2 x)$ choices for $k,k'$, we thus see that by deleting at most $O(\frac{x}{\log^2 x \log_2 x})$ elements from $A$, one can obtain a slightly smaller set $A'$ that still obeys the asymptotic \eqref{ap}, but on which $\varphi$ is now nondecreasing.  Since
$$ \# A' - \pi(x) \gg \frac{x}{\log^2 x}$$
for sufficiently large $x$, we obtain the claim.
\end{proof}

\subsection{A conditional result?}

Despite these two obstructions, it is possible that a more refined version of the analysis in this paper can be used to show that a significant improvement to the bounds in Theorem \ref{thm:main} can be achieved \emph{unless} one of the above two scenarios occurs (breakdown of Legendre-type conjectures, or breakdown of Dickson--Hardy--Littlewood type conjectures).  Indeed, the fact (see Remark \ref{eq}) that Lemma \ref{prelim} gains a factor of two when $q \neq 1, \frac{1}{2}$ implies (by a routine modification of the arguments in this paper) that, if one sets $A$ to be the elements of $[x]$ that are not a prime, or a power of two times a prime, then $M(A) \leq (\frac{1}{2} + O( \frac{\log_2^5 x}{\log x} )) \pi(x)$.  Thus largest subsets of $[x]$ on which $\varphi$ are increasing must have almost half or more of their elements consist of primes, or powers of two times a prime.  With a suitable form of the Dickson--Hardy--Littlewood conjectures, one might be able to show that the sets which contain a large number of powers of two times a prime lead to a set with fewer than $\pi(x)$ elements, and so one would be left with studying sets that largely consist of primes.  Here, one could take advantage of the phenomenon (already observed in \cite[\S 9]{pomerance}) that for any prime $p$, the first composite number $n > p$ with $\varphi(n) \geq \varphi(p)$ must in fact exceed $p + \sqrt{p} - 1$, which suggests that (in the absence of extremely large prime gaps, as in Proposition \ref{leg}), it could rule out competitors to the set of primes in which one or more primes in the ``interior'' of the set are replaced with composite numbers.  In view of known results on the anatomy of integers in short intervals, one might hope to obtain a bound such as $M(x) = \pi(x) + O( x^\theta )$ for some $0 < \theta < 1$ assuming additional conjectures, such as a sufficiently quantitative version of the Dickson--Hardy--Littlewood conjecture.  In a closely related spirit, one could also seek to obtain a ``short interval'' version of the main theorem of the form
$$ M( [x + x^\theta] \backslash [x] ) \leq (1+o(1)) (\pi(x+x^\theta) - \pi(x))$$
for some $0 < \theta < 1$.

\subsection{Analogues for the sum of divisors function}\label{divisors-sec}

It turns out (as already conjectured in \cite{erdos}) that the Euler totient function $\varphi(n)$ may be replaced\footnote{See \url{https://oeis.org/A365398} for the analogue of $M(x)$ for the sum of divisors function.} by the sum of divisors function $\sigma(n) \coloneqq  \sum_{d|n} d$ throughout the above arguments.  The main difficulty is to obtain an analogue
\begin{equation}\label{pol}
\sum_{\frac{\sigma(d)}{d} = q} \frac{1}{d} \leq 1
\end{equation}
of Proposition \ref{prelim} for the sum of divisor function, since there is no analogue of Lemma \ref{exact}.  For instance, $\sum_{\frac{\sigma(d)}{d} = 2} \frac{1}{d}$ is the sum of reciprocals of perfect numbers, for which there are notorious open questions (for instance, it is still open whether there are infinitely many perfect numbers), though in this case $q=2$ the sum in \eqref{pol} is known to equal $0.204520\dots$ \cite[Theorem 7]{bayless}. On the other hand, the weaker bound
$$\sum_{\frac{\sigma(d)}{d} = q} \frac{1}{d} \ll 1$$
was observed\footnote{We thank Paul Pollack for these references.} in \cite[p. 63]{erdos-dist} to follow from the estimate claimed in \cite[Theorem 3]{erdos-remarks} (see also \cite[p. 212--213]{elliott} for an alternate proof).  In \cite[p. 63]{erdos-dist} it was also claimed that \eqref{pol} could be shown ``with more trouble'', but without a full proof provided.  Such problems are also related to those of counting solutions to $\sigma(n)=\sigma(n+k)$ for fixed $k$; see \cite{ford-b}.  Since the initial release of this preprint, a full proof of \eqref{pol} using the methods in \cite{erdos-remarks} has been kindly provided to us (via the author's personal blog) by Shengtong Zhang, which we reproduce below.  Given this inequality, the arguments in \Cref{main-sec} can be modified as follows in order to obtain the analogue of \Cref{thm:main} for the sum of divisors function:
\begin{itemize}
    \item All occurrences of $\varphi$ should now be replaced with $\sigma$ (including in the definition of $M()$).
    \item In the proof of \Cref{a5}, all displayed expressions of the form $1 - \frac{\dots}{p}$ should be replaced with $1 + \frac{\dots}{p}$ (and similarly with $p$ replaced by $p'$), and ``$I_{i,k'}$ lies strictly to the right of $I_{i,k}$'' should be replaced with ``$I_{i,k'}$ lies strictly to the left of $I_{i,k}$''.
    \item In the proof of \Cref{a7}, all displayed expressions of the form $1 - \frac{1}{p}$ or $1 - O(\frac{1}{D^3})$ should be replaced with $1 + \frac{1}{p}$ or $1 + O(\frac{1}{D^3})$ respectively, and \Cref{prelim} must be replaced with \eqref{pol}.
\end{itemize}
In particular, the largest subset of $[x]$ on which $\sigma$ is nondecreasing has cardinality $\left(1+O(\frac{\log_2^5 x}{\log x})\right) \pi(x)$.

Now we present Zhang's proof of \eqref{pol}.  We may assume that $q \neq 1$, since the only solution to $\frac{\sigma(d)}{d}=1$ is $d=1$.
We split any natural number $d$ with $\frac{\sigma(d)}{d}=q$ uniquely into a product $d = \gamma s$ of a powerful number $\gamma$ (a number which is not divisible precisely once by any prime) and a squarefree number $s$ coprime to $\gamma$.  The key observation (a minor variant of the lemma in \cite[p. 173]{erdos-remarks}) is that for fixed $q$, the squarefree number $s$ is determined by $\gamma$.  Indeed, suppose we had two numbers $d = \gamma s$, $d' = \gamma s'$ with $\frac{\sigma(d)}{d} = \frac{\sigma(d')}{d'} = q$ with $s \neq s'$ squarefree and coprime to $\gamma$.  Then by multiplicativity we have
$$ \frac{\sigma(s)}{s} = \frac{\sigma(s')}{s'}.$$
If we let $a \coloneqq s/(s,s')$, $a' = s'/(s,s')$, then $a,a'$ are coprime and square-free with
\begin{equation}\label{aap}
 \frac{\sigma(a)}{a} = \frac{\sigma(a')}{a'}.
\end{equation}
If $a=1$ then $a'=1$ and hence $s=s'$, contradiction, and similarly if $a'=1$. Thus we may assume $a,a' > 1$.  At least one of $a,a'$ is not divisible by $3$, so without loss of generality we may assume that $a$ is not divisible by $3$.  Then the largest prime factor of $a$ divides $a$ but not $a'$ or $\sigma(a)$, contradicting \eqref{aap}, and the key observation follows.

We may thus write
$$ \sum_{\frac{\sigma(d)}{d}=q} \frac{1}{d} = \sum_\gamma \frac{1}{\gamma s_\gamma}$$
where $\gamma$ ranges over powerful numbers, and $s_\gamma = s_{\gamma,q}$ is equal to the (necessarily unique) square-free number coprime to $\gamma$ with $\frac{\sigma(\gamma s_\gamma)}{\gamma s_\gamma} = q$, or $s_\gamma \coloneqq +\infty$ if no such number exists.  Since every powerful number is the product of a square and a cube, we have
$$ \sum_\gamma \frac{1}{\gamma} \leq \zeta(2) \zeta(3) = 1.97730\dots < 2$$
(in fact the sum is about $1.94359\dots$).  This already establishes \eqref{pol} but with an upper bound of $2$ instead of $1$.  To refine the upper bound to $1$, it will suffice to establish the lower bound
$$ \sum_\gamma \frac{1}{\gamma} \left(1 - \frac{1}{s_\gamma}\right) \geq 1.$$
Since the summand is non-negative, it will suffice to establish this bound for the powers of $2$, thus we have reduced to showing that
\begin{equation}\label{final}
    1 - \frac{1}{s_1} + \sum_{j=2}^\infty \frac{1}{2^j} \left(1 - \frac{1}{s_{2^j}}\right) \geq 1.
\end{equation}

This can be verified by the following case analysis.
\begin{itemize}
    \item  The case $s_1=1$ cannot occur since $q$ is assumed to not equal $1$.
    \item  If $s_1=2$, then $q = \frac{\sigma(2)}{2}=\frac{3}{2}$, but then $s_{2^j}=\infty$ for $j \geq 2$ since $\frac{\sigma(2^j s)}{2^j s} \geq \frac{2^{j+1}-1}{2^j} > \frac{3}{2}$ for any $j \geq 2$ and $s$ coprime to $j$, and so the left-hand side of \eqref{final} is $1 - \frac{1}{2} + \sum_{j=2}^\infty \frac{1}{2^j} = 1$.
    \item If $s_1=\infty$, then the left-hand side of \eqref{final} is clearly at least $1 - \frac{1}{s_1}=1$.
    \item Finally, if $3 \leq s_1 < \infty$, then $q = \frac{\sigma(s_1)}{s_1}$ has a squarefree denominator, which implies for any $j \geq 2$ and $s$ coprime to $2^j$ that $\frac{\sigma(2^j)}{2^j} = \frac{2^{j+1}-1}{2^j}$ is not equal to $q$ (the denominator is divisible by $2^2$).  Hence $s_{2^j}$ cannot equal $1$, and must therefore be at least $3$ since $s_{2^j}$ must be coprime to $2^j$.  Thus the left-hand side of \eqref{final} is at least $1 - \frac{1}{3} + \sum_{j=2}^\infty \frac{1}{2^j} (1-\frac{1}{3}) = 1$.
\end{itemize}

\begin{remark}
    The argument in fact shows that equality in \eqref{pol} can only occur when $q=1$, so in some sense the situation with $\sigma$ is slightly better than that with $\varphi$ (in which $q=1/2$ is also an equality case). A variant of the above argument also shows that for any $\eps>0$, there are only finitely many $q$ for which the sum $\sum_{\frac{\sigma(d)}{d}=q} \frac{1}{d} \geq \eps$ (because one must have $s_\gamma \ll \frac{1}{\eps}$ for some powerful $\gamma \ll_\eps 1$).
\end{remark}

\begin{remark}\label{dedekind-rem}  Analogous results also hold when $\phi$ or $\sigma$ is replaced by\footnote{We thank an anonymous referee for this observation.} the Dedekind totient function
$$ \psi(n) \coloneqq n \prod_{p|n} \frac{p+1}{p}.$$
Again, the main difficulty is to establish an analogue of Proposition \ref{prelim} for this function.  The proof of Lemma \ref{exact} almost works for $\psi$ with only obvious modifications; however, due to the presence of the consecutive primes $2, 3$, there is one exception to the assertion that when
$$ \frac{\psi(d)}{d} = \prod_{p|d} \frac{p+1}{p}$$
is expressed in lowest terms as $\frac{a}{b}$, that the largest prime factor of $b$ is that of $d$.  Namely, when the prime factors of $d$ are precisely $2$ and $3$, then there is a cancellation
$$ \frac{\psi(d)}{d} = \frac{2+1}{2} \frac{3+1}{3} = \frac{2}{1}$$
and the largest prime factor of $b$ is now (by our convention) $1$ instead of $3$.  However, because $2,3$ are the only pair of consecutive primes, this is the unique exception to the above assertion.  As a consequence, it is possible to modify the inductive argument used to establish Lemma \ref{exact} to prove an analogue for $\psi$, with the one modification that if $q = \frac{2}{1}$, then the largest element of ${\mathcal P}$ is equal to $3$ rather than ${\mathcal P}$ being empty.  (For this, it is convenient to take the base case to be the case when the largest prime factor of $b$ is at most $3$, which can be handled by a finite case analysis.)  The rest of the arguments then continue as before with only obvious modifications; we leave the details to the interested reader.
\end{remark}

\end{document}